\newtheorem{thm}{Theorem}[section]
\newtheorem{prop}[thm]{Proposition}
\newtheorem{cor}[thm]{Corollary}
\newtheorem{lem}[thm]{Lemma}
\theoremstyle{definition}
\newtheorem{defn}[thm]{Definition}
\newtheorem{rem}[thm]{Remark}
\newcounter{labelflag} \setcounter{labelflag}{0}
\newcommand{\Label}[1]{
                       \ifnum\thelabelflag=1
                          \ifmmode
                             \makebox[0in][l]{\qquad\fbox{\rm#1}}
                          \else
                             \marginpar{\vspace{0.7\baselineskip}
                                        \hspace{-1.1\textwidth}
                                        \fbox{\rm#1}}
                          \fi
                       \fi
                       \label{#1} }
\newcommand{\be}{\begin{equation}}
\newcommand{\ee}{\end{equation}}
\newcommand{\R}{\mathbb{R}}
\newcommand{\N}{\mathbb{N}}
 \def \rn { {{\mathbb{R}}^N } }
\def \cq {{  C( {\overline{Q}} ) }}
\def \cqzero {{  C_0 ( {\overline{Q}} ) }}
\def \calf {{  {\mathcal{F}} }}
 \def \calftwo {{  {\mathcal{F}} }}
\def \cala {{  {\mathcal{A}}  }}
 \def \calb {{  {\mathcal{B}}  }}
\def \cald {{  {\mathcal{D}}  }}
\def \vt { {\tilde{v} }}
\def \thtwot {{  \theta_{t}  }}
 \def  \ltwo {{L^2 (Q)}}
\begin{document}

\begin{titlepage}
\title{\Large  \bf    Existence, Stability and Bifurcation of Random 
Complete and Periodic 
Solutions of Stochastic Parabolic Equations }
\vspace{10mm}

\author{
Bixiang Wang 
\vspace{5mm}\\
Department of Mathematics\\
 New Mexico Institute of Mining and
Technology  \\ Socorro,  NM~87801, USA  \vspace{3mm}\\
Email: bwang@nmt.edu }
\date{}
\end{titlepage}

\maketitle

\medskip

\begin{abstract}
In this paper,   we study 
 the existence, stability and bifurcation  of  random 
 complete    and periodic solutions
for 
stochastic parabolic equations with multiplicative noise.
We  first prove  the  existence and uniqueness
of tempered random attractors  for the stochastic equations  and 
characterize the structures  of the  attractors by random complete solutions.
We  then  examine  the existence  and stability of random complete
quasi-solutions and establish the relations of these solutions and
the structures of tempered  attractors.
When the stochastic equations are incorporated 
with periodic  forcing,   we obtain   the existence and stability of random
periodic solutions. 
  For the stochastic Chafee-Infante equation, we further establish 
  the multiplicity  and  stochastic    bifurcation  of   complete    and periodic solutions.
 \end{abstract}

{\bf Key words.}       Random attractor;   periodic attractor;
random periodic solution;  random complete solution;  bifurcation.

 {\bf MSC 2010.} Primary 37L55.  Secondary 37L30, 35R60, 60H15.

\baselineskip=1.3\baselineskip

\section{Introduction}
\setcounter{equation}{0}

 In this paper,  
 we study the existence, stability and bifurcation of random 
 complete    and periodic 
 solutions of a class of stochastic parabolic equations
 with multiplicative noise. 
 Let   $Q$  be a bounded smooth  open set in $\R^n$ with boundary
$\partial Q$.
Given $\tau \in \R$, 
consider    the  following equation  
     on $ (\tau, \infty) \times Q $: 
 \be
 \label{intro1}
  {\frac {\partial u}{\partial t}}
     - \Delta u     =  f(t,x, u)   +  g(t, x) 
     +  \alpha u \circ  {\frac {d \omega}{dt}},
      \quad x \in Q \ \ \mbox{ and } \ t >\tau, 
 \ee
 with  initial condition 
 $u(\tau, \cdot) = u_\tau$
 and homogeneous Drichlet
 boundary condition, 
where   $\alpha $ is a  positive constant,
$g\in L^1_{loc} (\R, L^\infty (Q))$, 
$\omega$  is a  two-sided real-valued Wiener process on a probability space,
and $f$ is a   locally Lipschitz continuous function satisfying  
some structural conditions.
The stochastic equation \eqref{intro1}
is understood  
in the sense of  Stratonovich  integration.

It is known   that complete solutions play an important role
in  determining the long time dynamics of evolution equations.
For instance, the structures of global attractors of 
some deterministic
systems are completely characterized  by bounded complete solutions,
see, e.g., \cite{bab1, bal2, carva1, carva2,  hal1, sel1, tem1}.
Similarly, the structures of  random attractors
of stochastic equations  are fully
described by   random complete solutions
as demonstrated in \cite{wan4}.
Note that random traveling wave solutions can be considered as
a class of random complete solutions, and such solutions 
have  been introduced  by Shen in \cite{shen1} and studied
in \cite{nole1, nole2, nole3, nole4}.
Since parabolic equations possess the comparison principle,
the dynamical systems generated by these equations are monotone.
The dynamics of monotone  systems has been extensively
investigated in the literature, see, e.g., 
\cite{chu1, chu2, hess1,   hir1, lan1,  pola1,  smi1}.
For such systems, complete solutions can be used
to describe the structures of attractors in more detail.
For deterministic monotone equations,  it is possible to
find a stable  complete solution by which the attractor is bounded
from above.  It is also possible to find  such a  
solution  to bound the attractor from below, see, e.g.,
\cite{lan1, lan2} and the references therein.
For autonomous random systems, the role of complete
solutions should be replaced by stationary solutions,
and in this case, random attractors are bounded
by   extremal stationary   solutions from above and below, respectively,
see, e.g., \cite{arn1, chu1, chu2}.

Note that the articles mentioned above 
 deal with either deterministic systems or
autonomous random systems. As far as the author is aware, there is
no analogous result  available in the literature 
regarding  the existence,   stability and bifurcation of
extremal   random complete
solutions for non-autonomous  stochastic equations.
In this paper,  we will  employ the non-autonomous random dynamical
systems theory to investigate such solutions for 
   parabolic equations with multiplicative noise.
More precisely, we will show  that the 
nonlinear stochastic equation
\eqref{intro1}   has a unique 
tempered random attractor in the
space $\cq$ of  continuous functions
on ${\overline{Q}}$  with supremum norm.
We will also prove  that a linear  equation
associated with \eqref{intro1}   possesses a unique
tempered complete quasi-solution in $\cq$
which attracts all solutions.
Then by the comparison principle, we show   that
the nonlinear equation
\eqref{intro1}
has two tempered complete quasi-solutions
$u^*$  and $u_*$, where $u^*$  is 
    maximal  with respect to the random attractor and
  $u_*$ is   minimal.
  The stability of $u^*$ and $u_*$ is also obtained.
    As we will   see later, 
    the maximal solution $u^*$ is stable from above
  and  the   minimal solution $u_*$ is stable from below
  (see Theorem \ref{max1} for more details). 
  Based  on  this result,  we further study the bifurcation problem
  of random complete  solutions of  a specific parabolic equation,
  i.e., the  
 stochastic   
  Chafee-Infante equation.
    We will prove the
  stochastic Chafee-Infante equation undergoes a
  pitchfork bifurcation when a parameter $\nu$ crosses 
  the first eigenvalue $\lambda_1$
  of the negative Laplacian with homogeneous Dirichlet
  boundary condition.
  Actually,   when 
  $\nu \le \lambda_1$,  the origin is the unique
  random complete  quasi-solution and it  is stable. 
  In this case,   the random attractor of the equation 
  is trivial. 
  When $\nu$ passes $\lambda_1$ 
  from below,  the origin loses its stability
  and  two more random complete  quasi-solutions appear.
  This means  
  the equation has at least three random complete quasi-solutions:
  $u^*$, $u_*$ and  $0$ for $\nu > \lambda_1$. 
  Furthermore,   the 
     nontrivial   solutions
  $u^*$   and $u_*$    approach    zero    in $\cq$ when
  $\nu \to \lambda_1$
  (see Theorem \ref{thmbi}). 
  Particularly,   if  $f$   and $g$ in \eqref{intro1}
  are periodic in time, then 
    $u^*$   and $u_*$  become pathwise  random periodic solutions
 (see Theorem \ref{max2}).  
 In this case, we  obtain the bifurcation
 of pathwise random periodic solutions
 that  seems to be the first result  of its kind.
 The reader is referred to \cite{fen1, fen2, zhao1}
 for  details regarding random periodic solutions.
    
  As mentioned before,   the  idea of the present paper  is based on 
  the attractors theory  of
  random dynamical
  systems generated by  non-autonomous stochastic PDEs.
  The existence of random attractors for such systems has been
  established in 
  \cite{car6, dua3, wan4}.
  If  a stochastic equation does not contain deterministic 
  non-autonomous forcing,  then  we say the equation is
  a autonomous stochastic one.
    The concept of 
    random attractor for autonomous systems
  was introduced
   in \cite{cra1, fla1, schm1}, 
   and the existence of  such  attractors
   have been established for various  equations,
   see, e.g.,   \cite{arn1, bat1, bat2, car1, car2, 
   car3, car4, car5, chu1, chu2, cra1, cra2, 
    fla1, gar1, gar2, huang1, kloe1, schm1, wan2, wan3}. 
    For the existence of invariant manifolds 
    for stochastic PDEs,  we refer the reader 
    to \cite{dua1, dua2, lia1, moh1} for details.

   This  paper is organized as follows.
   In the next section, 
   we recall  some  basic  concepts  regarding 
   random attractors of non-autonomous stochastic equations.
   In Section 3,  we    define a continuous random dynamical
   system for  the stochastic equation \eqref{intro1}.
   Section 4 is devoted to 
    the existence of tempered random attractors
   and   periodic attractors for  \eqref{intro1}.
   In Section 5, we discuss  the 
    structures of the attractors  
    as well as  the  
    existence and stability
   of random  complete  quasi-solutions and  random periodic solutions.
   We  then  investigate the bifurcation of random complete and  periodic solutions
   of the Chafee-Infante equation in the last section.

Throughout  this paper,    we  will use  
$\cq$ to denote the space of continuous functions on ${\overline{Q}}$
with supremum norm. We will also use 
$\cqzero$ to denote 
 the     subspace of $\cq$
 which consists   of continuous functions vanishing 
    at $\partial Q$.  The norm of a       Banach space $X$  is written as    $\|\cdot\|_{X}$.
   The letters $c$ and $c_i$ ($i=1, 2, \ldots$)
are  generic positive constants  which may change their  values occasionally.

\section{Preliminaries}
\setcounter{equation}{0}

 For the reader\rq{}s convenience, 
    we recall some concepts from \cite{wan4} regarding 
pullback   attractors    of  non-autonomous  stochastic equations.
The reader is also referred  to  
  \cite{bat1, cra1, cra2, fla1, schm1} for  similar results
  on autonomous stochastic equations, 
   and to \cite{bab1, bal2, hal1, sel1, tem1}
for deterministic attractors.   

Hereafter,  we assume  that
  $(X, d)$   is  a complete
separable  metric space
and 
 $(\Omega, \calftwo, P,  \{\thtwot\}_{t \in \R})$
 is a metric dynamical system.
 Let $\cald$   be 
 a  collection  of  some families of  nonempty subsets of $X$
 parametrized by $\tau \in \R$ 
 and $\omega \in \Omega$.
 A mapping 
$K: \R \times \Omega  \to  2^X$   with closed  nonempty
images is said to be measurable if $K(t, \cdot)$ is
  $(  \calftwo, \ \calb(X) )$-measurable
for every  fixed $t \in \R$.

\begin{defn} \label{ds1}
A mapping $\Phi$: $ \R^+ \times \R \times \Omega \times X
\to X$ is called a continuous  cocycle on $X$
over 
$(\Omega, \calftwo, P,  \{\thtwot\}_{t \in \R})$
if   for all
  $\tau\in \R$,
  $\omega \in   \Omega $
  and    $t, s \in \R^+$,  the following conditions (i)-(iv)  are satisfied:
\begin{itemize}
\item [(i)]   $\Phi (\cdot, \tau, \cdot, \cdot): \R ^+ \times \Omega \times X
\to X$ is
 $(\calb (\R^+)   \times \calftwo \times \calb (X), \
\calb(X))$-measurable;

\item[(ii)]    $\Phi(0, \tau, \omega, \cdot) $ is the identity on $X$;

\item[(iii)]    $\Phi(t+ s, \tau, \omega, \cdot) =
 \Phi(t,  \tau +s,  \theta_{s} \omega, \cdot) 
 \circ \Phi(s, \tau, \omega, \cdot)$;

\item[(iv)]    $\Phi(t, \tau, \omega,  \cdot): X \to  X$ is continuous.
    \end{itemize}
    
    If,  in addition,  there exists  a
    positive number   $T $ such that
    $
\Phi(t,  \tau +T, \omega, \cdot)
= \Phi(t, \tau,  \omega, \cdot )$
    for all  $t  \in \R^+$,
     $\tau \in \R$  and $\omega \in \Omega$,
 then $\Phi$ is called  
a  continuous periodic  cocycle  on $X$ with period $T$.
\end{defn}

\begin{defn}
\label{comporbit}
      (i)   A mapping $\psi: \R \times \R \times \Omega$
 $\to X$ is called a complete orbit (solution) of $\Phi$ if for every  $t \in \R^+ $,
 $s, \tau \in \R$ and $\omega \in \Omega$,   
$ 
 \Phi (t,  \tau +s, \theta_{s} \omega,
  \psi (s, \tau, \omega) )
  = \psi (t +  s, \tau, \omega )
$.
 If, in  addition,    there exists $D=\{D(\tau, \omega): \tau \in \R,
 \omega \in \Omega \}\in \cald$ such that
 $\psi(t, \tau, \omega)$ belongs to
 $D ( \tau +t, \theta_{ t} \omega )$
 for every  $t \in \R$, $\tau \in \R$
 and $\omega \in \Omega$, then $\psi$ is called a
 $\cald$-complete orbit (solution)  of $\Phi$.

      (ii)   A mapping $\xi:  \R   \times \Omega$
 $\to X$ is called a complete quasi-solution  of $\Phi$ if for every  $t \in \R^+ $,
 $\tau \in \R$ and $\omega \in \Omega$,  
$ 
 \Phi (t,  \tau,  \omega,
  \xi (\tau, \omega) )
  = \xi (  \tau +t,  \theta_t \omega )
$.
 If, in  addition,    there exists $D=\{D(\tau, \omega): \tau \in \R,
 \omega \in \Omega \}\in \cald$ such that
 $\xi (\tau, \omega)$ belongs to
 $D ( \tau,  \omega )$
 for all   $\tau \in \R$
 and $\omega \in \Omega$, then $\xi$ is called a
 $\cald$-complete   quasi-solution  of $\Phi$. 
 
 (iii) 
A complete  quasi-solution $\xi$ of $\Phi$  is said to be periodic with
period $T$ if  $\xi (\tau +T, \omega) =\xi(\tau, \omega)$
for all $\tau \in \R$  and $\omega \in \Omega$.
Such a solution is   called   
  a random periodic  solution   in \cite{zhao1}.
  \end{defn}

\begin{defn}
\label{asycomp}
A cocycle 
$\Phi$ is said to be  $\cald$-pullback asymptotically
compact in $X$ if
for all $\tau \in \R$ and
$\omega \in \Omega$,    the sequence
$ 
\{\Phi(t_n,    \tau -t_n, \theta_{ -t_n} \omega,
x_n)\}_{n=1}^\infty$   has a convergent  subsequence  in     
$X $ 
 whenever
  $t_n \to \infty$, and $ x_n\in   B(\tau -t_n,
  \theta_{ -t_n} \omega )$   with
$\{B(\tau, \omega): \tau \in \R, \ \omega \in \Omega
\}   \in \mathcal{D}$.
\end{defn}

\begin{defn}
\label{defatt}
 Let $\cald$ be a collection of some families of
 nonempty  subsets of $X$
 and
 $\cala = \{\cala (\tau, \omega): \tau \in \R,
  \omega \in \Omega \} \in \cald $.
Then     $\cala$
is called a    $\cald$-pullback    attractor  for
  $\Phi$
if the following  conditions (i)-(iii) are  fulfilled:
\begin{itemize}
\item [(i)]   $\cala$ is measurable
  and
 $\cala(\tau, \omega)$ is compact for all $\tau \in \R$
and    $\omega \in \Omega$.

\item[(ii)]   $\cala$  is invariant, that is,
for every $\tau \in \R$ and
 $\omega \in \Omega$,
$$ \Phi(t, \tau, \omega, \cala(\tau, \omega)   )
= \cala ( \tau +t, \theta_{t} \omega
), \ \  \forall \   t \ge 0.
$$

\item[(iii)]   For every
 $B = \{B(\tau, \omega): \tau \in \R, \omega \in \Omega\}
 \in \cald$ and for every $\tau \in \R$ and
 $\omega \in \Omega$,
$$ \lim_{t \to  \infty} d (\Phi(t, \tau -t,
 \theta_{-t}\omega, B(\tau -t, 
 \theta_{-t}\omega) ) , \cala (\tau, \omega ))=0.
$$
 \end{itemize}
 If, in addition, there exists $T>0$ such that
 $$
 \cala(\tau +T, \omega) = \cala(\tau,    \omega ),
 \quad \forall \  \tau \in \R, \forall \
  \omega \in \Omega,
 $$
 then we say $\cala$ is periodic with period $T$.
\end{defn}
 
  The following result  can be found in  \cite{wan4}. 
For similar results, see 
\cite{bat1, car6,  cra2, fla1, schm1}.

\begin{prop}
\label{att}  
 Let $\cald$ be  an inclusion-closed
 collection of some  families of   nonempty subsets of
$X$,  and $\Phi$  be a continuous   cocycle on $X$
over  
$(\Omega, \calftwo, P,  \{\thtwot\}_{t \in \R})$.
If   
$\Phi$ is $\cald$-pullback asymptotically
compact in $X$ and $\Phi$ has a  
closed  
   measurable  
     $\cald$-pullback absorbing set
  $K$ in $\cald$,  then
$\Phi$ has a  $\cald$-pullback
attractor $\cala$  in $\cald$.
   The $\cald$-pullback
attractor $\cala$   is unique   and is given  by,
for each $\tau  \in \R$   and
$\omega \in \Omega$,
$$
\cala (\tau, \omega)
=\Omega(K, \tau, \omega)
=\bigcup_{B \in \cald} \Omega(B, \tau, \omega)
$$
$$
 =\{\psi(0, \tau, \omega): \psi  \text{ is a   }  \cald  \text{-}
 \text{complete  solution  of } \Phi \} 
$$
$$
 =\{\xi( \tau, \omega): \xi  \text{ is a   }  \cald  \text{-}
 \text{complete   quasi-solution  of } \Phi \} .
$$

If, in addition, both $\Phi$  and $K$ are $T$-periodic, then so is the attractor
$\cala$.
  \end{prop}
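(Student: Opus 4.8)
The plan is to realize $\cala$ as the family of $\Omega$-limit sets of the absorbing set and to verify the conditions of Definition~\ref{defatt} in turn. For $B=\{B(\tau,\omega)\}\in\cald$ I would define
\be
\Omega(B,\tau,\omega)=\bigcap_{s\ge 0}\ \overline{\bigcup_{t\ge s}\Phi\big(t,\tau-t,\theta_{-t}\omega, B(\tau-t,\theta_{-t}\omega)\big)} ,
\ee
and set $\cala(\tau,\omega)=\Omega(K,\tau,\omega)$. I would first record the standard characterization that $y\in\Omega(B,\tau,\omega)$ if and only if there are $t_n\to\infty$ and $x_n\in B(\tau-t_n,\theta_{-t_n}\omega)$ with $\Phi(t_n,\tau-t_n,\theta_{-t_n}\omega,x_n)\to y$. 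Nonemptiness and compactness of $\cala(\tau,\omega)$ then follow from $\cald$-pullback asymptotic compactness: every sequence $\Phi(t_n,\tau-t_n,\theta_{-t_n}\omega,x_n)$ with $x_n\in K(\tau-t_n,\theta_{-t_n}\omega)$ has a convergent subsequence, which produces limit points (nonemptiness) and precompactness of the nested intersection defining the closed set $\cala(\tau,\omega)$.

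Next I would prove attraction and invariance. For attraction I argue by contradiction: if condition (iii) fails for some $B\in\cald$, there are $t_n\to\infty$ and $x_n\in B(\tau-t_n,\theta_{-t_n}\omega)$ whose images stay a distance $\delta>0$ from $\cala(\tau,\omega)$; asymptotic compactness gives a subsequential limit, which by the characterization lies in $\Omega(B,\tau,\omega)$, and a cocycle splitting using that $K$ absorbs $B$ gives $\Omega(B,\tau,\omega)\subseteq\Omega(K,\tau,\omega)=\cala(\tau,\omega)$, contradicting the distance bound. Invariance $\Phi(t,\tau,\omega,\cala(\tau,\omega))=\cala(\tau+t,\theta_t\omega)$ I would obtain by two inclusions: the inclusion ``$\subseteq$'' uses the cocycle identity (iii) and continuity of $\Phi(t,\tau,\omega,\cdot)$ to recognize the images of points of $\cala(\tau,\omega)$ as a pullback sequence based at $(\tau+t,\theta_t\omega)$, while ``$\supseteq$'' uses asymptotic compactness to extract from the preimages of a point of $\cala(\tau+t,\theta_t\omega)$ a convergent subsequence whose limit lies in $\cala(\tau,\omega)$ and is carried onto the given point by $\Phi(t,\tau,\omega,\cdot)$.

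For the two variational formulas I would proceed as follows. Given $a\in\cala(\tau,\omega)$, invariance yields $a\in\Phi(n,\tau-n,\theta_{-n}\omega,\cala(\tau-n,\theta_{-n}\omega))$ for each $n\in\N$; a diagonal argument through the compact sets $\cala(\tau-n,\theta_{-n}\omega)$ based on asymptotic compactness then produces a $\cald$-complete orbit $\psi$ with $\psi(0,\tau,\omega)=a$ and $\psi(s,\tau,\omega)\in\cala(\tau+s,\theta_s\omega)$ for all $s$. Conversely, if $\psi$ is any $\cald$-complete solution then $\psi(0,\tau,\omega)\in\cala(\tau,\omega)$, since $\psi(-t,\tau,\omega)$ lies in a member of $\cald$ and $\Phi(t,\tau-t,\theta_{-t}\omega,\cdot)$ maps it to $\psi(0,\tau,\omega)$, so attraction as $t\to\infty$ places the fixed point $\psi(0,\tau,\omega)$ in the closed set $\cala(\tau,\omega)$; this gives the first equality. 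For the quasi-solution formula, the map defined along the driving orbit by $\xi(\tau+s,\theta_s\omega)=\psi(s,\tau,\omega)$ satisfies $\Phi(t,\tau',\omega',\xi(\tau',\omega'))=\xi(\tau'+t,\theta_t\omega')$ there, and extending it by a choice of complete orbit over the remaining orbits of $(\tau',\omega')\mapsto(\tau'+t,\theta_t\omega')$ produces a $\cald$-complete quasi-solution with $\xi(\tau,\omega)=a$ and $\xi(\tau',\omega')\in\cala(\tau',\omega')\subseteq K(\tau',\omega')$; the reverse inclusion is the same attraction argument applied to $\xi(\tau,\omega)=\Phi(t,\tau-t,\theta_{-t}\omega,\xi(\tau-t,\theta_{-t}\omega))$.

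Finally, measurability, uniqueness and periodicity. Measurability of $\cala(\tau,\cdot)$ I would deduce from measurability of $K$ and condition (i) of Definition~\ref{ds1}, expressing $\cala(\tau,\omega)$ through the countable operations in its defining formula and invoking a measurable-selection/projection argument to conclude that $\omega\mapsto d(x,\cala(\tau,\omega))$ is measurable for each $x\in X$; inclusion-closedness of $\cald$ together with $\cala(\tau,\omega)\subseteq K(\tau,\omega)$ then gives $\cala\in\cald$. Uniqueness follows because two $\cald$-pullback attractors, being invariant, compact and mutually attracting, must coincide. Periodicity is read off the formula: if $\Phi$ and $K$ are $T$-periodic, then $\Phi(t,\tau+T-t,\theta_{-t}\omega,\cdot)=\Phi(t,\tau-t,\theta_{-t}\omega,\cdot)$ and $K(\tau+T-t,\theta_{-t}\omega)=K(\tau-t,\theta_{-t}\omega)$, whence $\cala(\tau+T,\omega)=\cala(\tau,\omega)$. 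I expect the main obstacles to be the measurability of $\cala$ and the diagonal backward construction of the complete orbit through a prescribed point, since both require delicate coordination of the asymptotic compactness with the measurable and cocycle structure.
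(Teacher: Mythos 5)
The paper itself contains no proof of Proposition \ref{att}: it is quoted from \cite{wan4}, so the only benchmark is the argument of that reference. Your proposal reconstructs exactly that standard route --- defining $\cala(\tau,\omega)$ as the pullback omega-limit set of the absorbing set $K$, the sequential characterization of limit points, nonemptiness/compactness and attraction from $\cald$-pullback asymptotic compactness, invariance by the two-inclusion cocycle argument (your use of $\theta_{-t_n}\theta_t=\theta_{t-t_n}$ to recognize the inner terms as a pullback sequence is the right computation), the backward diagonal construction of a complete orbit through a prescribed point of the attractor, the extension to a complete quasi-solution over the skew-product orbits (well defined since $t\mapsto(\tau+t,\theta_t\omega)$ cannot return to its starting point, the first coordinate being strictly monotone), uniqueness by mutual attraction of two invariant compact families in $\cald$, and periodicity read off the defining formula together with $T$-periodicity of $\Phi$ and $K$. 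All of these steps are sound as sketched, and $\cala\in\cald$ follows as you say from inclusion-closedness and $\cala(\tau,\omega)\subseteq K(\tau,\omega)$.

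The one genuine soft spot is the measurability step, and it is precisely the point the paper flags in the Remark following the proposition. Your ``measurable-selection/projection argument'' will, in its standard implementation, only yield measurability of $\omega\mapsto\cala(\tau,\omega)$ with respect to the $P$-completion of $\calftwo$: the sets $\bigcup_{t\ge n}\Phi(t,\tau-t,\theta_{-t}\omega,K(\tau-t,\theta_{-t}\omega))$ involve an uncountable union over $t$, and since $\Phi$ is only assumed jointly measurable --- not continuous --- in $t$, one cannot reduce to a countable dense set of times; the projection theorem then gives universally measurable, not $\calftwo$-measurable, distance functions. That is exactly the state of affairs in \cite{wan4}, whereas condition (i) of Definition \ref{defatt} read literally asks for $(\calftwo,\calb(X))$-measurability, which is a separate argument carried out in \cite{wan8} exploiting the $\cald$-pullback asymptotic compactness. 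So you should either invoke \cite{wan8} for that step or state the conclusion with respect to the completed $\sigma$-algebra, as \cite{wan4} does; with that emendation your proof is complete and coincides with the cited one.
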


\begin{rem}
We emphasize that   
the  attractor   $\cala$
in Propositions \ref{att}  
is $(\calf, \calb (X))-$measurable 
which was proved   in
\cite{wan8}. While,    the  measurability of $\cala$
was only proved  in \cite{wan4}  with respect to 
   the $P$-completion
 of $\calf$.
\end{rem}

\section{Nonlinear Stochastic  Equations}
\setcounter{equation}{0}

In this section,  we  introduce the nonlinear stochastic PDEs
 we will study. 
Suppose   $Q$  is a bounded smooth  open set in $\R^n$ with boundary
$\partial Q$.
Consider       the   stochastic  parabolic 
   equations  with multiplicative noise 
    defined on $ (\tau, \infty) \times Q  $
   with  $\tau \in \R$:  
 \be
 \label{rde1}
  {\frac {\partial u}{\partial t}}
     - \Delta u     =  f(t,x, u)   +  g(t, x) 
     +  \alpha u \circ  {\frac {d \omega}{dt}},
      \quad x \in Q \ \ \mbox{ and } \ t >\tau, 
 \ee
 with boundary condition
 \be\label{rde2}
 u = 0,   \quad x \in  \partial Q \ \ \mbox{ and } \ t >\tau, 
 \ee
 and   initial condition
 \be\label{rde3}
 u(\tau,  x) = u_\tau (x),   \quad x\in  Q,
 \ee
where   $\alpha $ is a  positive constant,
$g\in L^1_{loc} (\R, L^\infty (Q))$, 
$\omega$  is a  two-sided real-valued Wiener process on a probability space,
and the symbol  $\circ$   indicates that the equation is understood
in the sense of  Stratonovich  integration.
 The nonlinearity    $f: \R \times {\overline{Q}} \times \R
 \to \R$ is continuous in 
 $(t,x,s) \in \R \times {\overline{Q}} \times \R$.
 We further assume that $f$
   is    locally  Lipschitz  continuous
  in $s \in \R$  in the sense  that for any bounded intervals
  $I_1$  and $I_2$,  there exists   $L>0$ (depending on $I_1$ and $I_2$)
  such that for all $t \in I_1$, $s_1, s_2 \in I_2$ and $x \in Q$, 
  \be
  \label{f1}
  |f(t, x, s_1) - f(t, x, s_2)|
  \le L  |s_1 -s_2| .
  \ee
 
 Let $\lambda$ be the first eigenvalue of the negative Laplacian on $Q$
 with homogeneous Dirichlet boundary condition.
 Suppose that  there exist   $\beta \in (0, \lambda)$
 and $ h \in L^1_{loc}(\R, L^\infty (Q))$
  such that
 \be
 \label{f2}
 f(t,x, 0) =0
 \ \mbox{ and } \ 
 f(t,x, s) s \le 
  \beta s^2 + h(t,x) |s|, \quad \mbox{   for all } \  t \in \R, \ x \in Q
  \ \mbox{ and }  \ s \in \R.
 \ee
Throughout this section, we 
 assume that $\delta$ 
is a  fixed constant
such that
\be\label{delta1}
0 < \delta < \lambda -\beta.
\ee
 Suppose  $g$  and $h$ satisfy the following condition: 
 for every $\tau \in \R$, 
 \be
\label{gh1}
\int_{-\infty}^\tau   e^{\delta s}
\left (
 \| g (s, \cdot) \|_{L^\infty(Q)}
 +
 \| h (s, \cdot) \|_{L^\infty(Q)}
 \right ) ds
< \infty,
\ee
where $\delta$ is as in  \eqref{delta1}.
Sometimes,  
 we also assume  $g$ and $h$ 
have the property:
for every $c>0$,  $\tau \in \R$   and
$\omega \in \Omega$,
\be\label{gh2}
\lim_{r \to -\infty}
e^{  c r } \int_{-\infty}^{0}
  e^{\delta s}
 \left (
 \| g (s+r, \cdot)\|_{L^\infty(Q)}
 +
 \| h (s+r, \cdot)\|_{L^\infty(Q)}
 \right )
  ds =0.
 \ee
 Note that \eqref{gh2} implies \eqref{gh1}.
 To describe   the   probability space  that   will be   used
in this paper,  we write  
$ 
\Omega = \{ \omega   \in C(\R, \R ): \ \omega(0) =  0 \}
$. 
Let $\calf$  be
 the Borel $\sigma$-algebra induced by the
compact-open topology of $\Omega$, and $P$
be  the corresponding Wiener
measure on $(\Omega, \calf)$.   
 There is a classical group  $\{\thtwot \}_{t \in \R}$  acting on  
 $(\Omega, \calf, P)$
    given by
$
 \thtwot \omega (\cdot) = \omega (\cdot +t) - \omega (t)$
for all  $ \omega \in \Omega$ and $ t \in \R$.
In addition,        
  there exists a $\thtwot$-invariant set 
  $\tilde{\Omega}\subseteq \Omega$
of full $P$ measure  such that
for each $\omega \in \tilde{\Omega}$, 
\be\label{aspomega}
  {\omega (t)}/t \to 0 \quad \mbox {as } \ t \to \pm \infty.
\ee
Hereafter,  we only consider the space 
$\tilde{\Omega}$, and hence
    write  $\tilde{\Omega}$ as 
$\Omega$  for   convenience. 
 Let $u(t, \tau, \omega, f, g, u_\tau)$ be  the solution
 of problem \eqref{rde1}-\eqref{rde3} and 
  $v$ be a new variable given by
 \be\label{vu}
 v(t, \tau, \omega,f, g,  v_\tau) = z(t, \omega)
 u(t, \tau, \omega, f, g, u_\tau)
 \quad \mbox{ with } \ v_\tau = z(\tau, \omega) u_\tau,
\ee
where   $ z(t, \omega) = e^{-\alpha \omega  (t)}$. 
From \eqref{rde1}-\eqref{rde3}  
 we get 
 \be
 \label{v1}
  {\frac {\partial v}{\partial t}}
     - \Delta v     = z(t, \omega)  f(t,x,  z^{-1} (t, \omega)  v )    +  z(t, \omega)  g(t, x) ,
      \quad x \in Q \ \ \mbox{ and } \ t >\tau, 
 \ee
 with  
 \be\label{v3}
 v|_{\partial Q} = 0   \ \mbox{ and }  
 v(\tau, x) =v_\tau (x).
 \ee
 It follows from \cite{mora1}   that
for every $\tau \in \R$, $\omega \in \Omega$
and $v_\tau  \in \cqzero$,  there exists $T>0$
such that   the deterministic 
problem \eqref{v1}-\eqref{v3} has a unique solution
$v \in C([\tau, \tau +T), \cqzero)$ given by the
variation of constants formula
\be
\label{nonv}
v(t, \tau, \omega, f, g, v_\tau)
=e^{\Delta (t-\tau)} v_\tau
+\int_\tau^t z(s, \omega)
e^{\Delta (t-s) }
\left (
 f(s, \cdot , z^{-1} (s,\omega) v(s)) + g(s, \cdot)
\right ) ds,
\ee
for all $t \in [\tau, \tau +T)$, where $\Delta$ is the Laplacian.
Moreover,  the solution depends continuously
on $v_\tau$ in $\cqzero$ and is measurable with respect
to $\omega \in \Omega$.  
By Lemma \ref{lgsol}  below, the 
    the solution
$v$ is actually
defined   for all   $t >\tau$.
Note that condition   \eqref{f2}  implies
 for every $\omega \in \Omega$,
 \be\label{f2_2}
 z(t,\omega) f(t,x, z^{-1} (t, \omega) s) s \le \beta s^2 + z(t, \omega) h(t,x) |s|,
  \quad   \  t \in \R, \ x \in Q
  \ \mbox{ and }  \ s \in \R.
  \ee
 We will use  \eqref{f2_2}   
 to control the solutions of
  \eqref{v1}
 by   the    linear  equation:
 \be
 \label{lv1}
  {\frac {\partial \vt}{\partial t}}
     - \Delta \vt  -\beta \vt   = z(t, \omega) 
     \left ( h(t,x) +   |g(t, x)|
     \right ) ,
      \quad x \in Q \ \ \mbox{ and } \ t >\tau, 
 \ee
 with 
 \be\label{lv3}
 \vt |_{\partial Q} = 0,     \ \ \mbox{ and }   \
 \vt (\tau, x) =\vt_\tau (x),  \
   x\in  Q.
 \ee
 For convenience,    we write  $A = -\Delta -\beta I$.
Then it is known  that 
$A$ is  a generator of an   analytic semigroup,
denoted  $\{e^{-At}\}_{t \ge 0}$, in $\cqzero$
  (see, e.g., \cite{mora1}).
 Given $\tau \in \R$, $\omega \in \Omega$
and $\vt_\tau  \in \cqzero$,  problem \eqref{lv1}-\eqref{lv3}
has a  solution
$\vt \in ([\tau, \infty), \cqzero) $  given by
 \be\label{linv}
\vt(t, \tau, \omega, g, h, \vt_\tau)
=e^{-A(t-\tau)}\vt_\tau
+\int_\tau^t  z(s, \omega) e^{-A(t-s)} 
\left (h(s, \cdot)  + |g(s, \cdot)| 
\right ) ds,
\ee
for all $t >\tau$.
A  mapping 
 $\xi: \R \times \Omega \to \cqzero$    is called a 
  complete quasi-solution of problem
\eqref{lv1}-\eqref{lv3} if 
for every  $\tau \in \R$,  $t >0$ and $\omega \in \Omega$,
  \be\label{colineq}
   \vt(t+\tau, \tau, \omega^{-\tau},g, h,  \xi(\tau, \omega))
  =\xi (\tau +t, \omega^t),
  \ee
  where $\omega^t$ is the translation
 of $\omega$ by $t$;     that is,
 $\omega^t(\cdot) = \omega (\cdot +t)$. 
 Note that such a mapping $\xi$
 is used in  \cite{zhao1} for the definition
 of random periodic solutions.
 Similarly,   from now on, 
 we will use  $f^t(\cdot, x, s)$,
 $g^t(\cdot,x)$  and
 $h^t(\cdot, x)$  for  the
 translations of $f$, $g$ and $h$
 in their first argument by $t$, respectively. 
   The dynamics of  the linear equation \eqref{lv1} is well understood
   as presented below.

\begin{lem}
\label{lcoms2}
  Suppose  \eqref{gh1}-\eqref{gh2}  hold.
Then  problem  \eqref{lv1}-\eqref{lv3}
has a unique tempered  complete  quasi-solution  $\xi:
\R  \times \Omega \to \cqzero$, which is given by,
for each $t \in \R$ and $\omega \in  \Omega$,
 \be\label{lcoms2a1}
 \xi  (t, \omega ) =
  \int_{-\infty} ^0  e^{ A s } z(s, \omega)  
  \left ( h^t (s, \cdot)  + |g^t (s, \cdot)| \right) ds.
 \ee
 In addition,  $\xi$ pullback attracts all solutions
 in the sense   that
 for every $t >0$, $\tau \in \R$ and $\omega \in  \Omega$,
 $$
\| \vt (\tau, \tau-t,  \omega^{-\tau}, g,h,  \vt_{\tau-t})
-\xi (\tau, \omega)\|_\cqzero
$$
\be\label{lcoms2a2}
\le
M e^{-(\lambda - \beta) t}
\left (
\|\vt_{\tau -t}\|_\cqzero
+\|  \xi (\tau-t, \omega^{-t})  \|_\cqzero 
\right ),
\ee
 where  $M $ is a positive constant
 independent of $t$, $\tau$ and $\omega$.

 Furthermore,  if    $g$  and   $ h$
   are  periodic with
 period $T>0$,  then  so is $\xi$, 
 i.e.,  $\xi (t+T, \omega) = \xi(t, \omega)$ for
 all $t \in \R$  and $\omega \in  \Omega$.
\end{lem}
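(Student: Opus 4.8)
The plan is to verify directly that the explicit candidate $\xi$ in \eqref{lcoms2a1} has all the required properties, relying throughout on the decay of the analytic semigroup generated by $A=-\Delta-\beta I$. Since $\lambda$ is the first eigenvalue of $-\Delta$ and $\beta<\lambda$, the Dirichlet heat semigroup satisfies $\|e^{\Delta t}\|\le Me^{-\lambda t}$ on $\cqzero$, whence $\|e^{-At}\|=\|e^{\beta t}e^{\Delta t}\|\le Me^{-(\lambda-\beta)t}$; equivalently $\|e^{As}\|\le Me^{(\lambda-\beta)s}$ for $s\le 0$. First I would show the integral in \eqref{lcoms2a1} converges in $\cqzero$ and that $\xi$ is tempered. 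Bounding the integrand by $Me^{(\lambda-\beta)s}|z(s,\omega)|\bigl(\|h^t(s,\cdot)\|_{L^\infty(Q)}+\|g^t(s,\cdot)\|_{L^\infty(Q)}\bigr)$ and using \eqref{aspomega} to dominate $z(s,\omega)=e^{-\alpha\omega(s)}$ by $C_\epsilon e^{-\epsilon s}$ for $s\le 0$ with $\epsilon$ small, the inequality $\delta<\lambda-\beta$ lets me replace $e^{((\lambda-\beta)-\epsilon)s}$ by $e^{\delta s}$ (as $s\le 0$), after which \eqref{gh1} gives convergence. Temperedness of $\xi$, i.e.\ membership in a family of $\cald$, follows from the same estimate applied to $\xi(\tau-t,\omega^{-t})$ together with \eqref{gh2}, whose factor $e^{cr}$ with $r=\tau-t\to-\infty$ is exactly what absorbs the pullback growth once the subexponential growth of the noise is taken up by slightly enlarging $c$.

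Next I would check that $\xi$ solves \eqref{colineq}. Inserting $\xi(\tau,\omega)$ into the variation-of-constants formula \eqref{linv} and using $z(s,\omega^{-\tau})=z(s-\tau,\omega)$, the solution at time $t+\tau$ becomes $e^{-At}\xi(\tau,\omega)+\int_0^t z(s,\omega)e^{-A(t-s)}\bigl(h^\tau(s)+|g^\tau(s)|\bigr)\,ds$. On the other side, writing out $\xi(\tau+t,\omega^t)$ with $z(s,\omega^t)=z(s+t,\omega)$ and $h^{\tau+t}(s)=h^\tau(s+t)$ and substituting $r=s+t$ recasts it as $\int_{-\infty}^t e^{A(r-t)}z(r,\omega)\bigl(h^\tau(r)+|g^\tau(r)|\bigr)\,dr$, which splits into $\int_{-\infty}^0$ and $\int_0^t$; the first piece reproduces $e^{-At}\xi(\tau,\omega)$ and the second reproduces the forcing integral, so the two sides agree. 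Periodicity is immediate: if $g,h$ are $T$-periodic in time then $h^{t+T}=h^t$ and $g^{t+T}=g^t$, so \eqref{lcoms2a1} yields $\xi(t+T,\omega)=\xi(t,\omega)$.

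For uniqueness and the attraction bound \eqref{lcoms2a2} I would exploit linearity. If $\eta$ is another tempered complete quasi-solution, then $w=\xi-\eta$ has identical forcing, so subtracting \eqref{colineq} gives $w(\tau+t,\omega^t)=e^{-At}w(\tau,\omega)$; replacing $(\tau,\omega)$ by $(\tau-t,\omega^{-t})$ and using $(\omega^{-t})^t=\omega$ yields $w(\tau,\omega)=e^{-At}w(\tau-t,\omega^{-t})$, so $\|w(\tau,\omega)\|_\cqzero\le Me^{-(\lambda-\beta)t}\|w(\tau-t,\omega^{-t})\|_\cqzero\to 0$ by temperedness, forcing $w=0$. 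For \eqref{lcoms2a2}, running the quasi-solution identity from $\tau-t$ to $\tau$ gives $\xi(\tau,\omega)=\vt(\tau,\tau-t,\omega^{-\tau},g,h,\xi(\tau-t,\omega^{-t}))$, so the difference in \eqref{lcoms2a2} is the difference of two solutions of the linear equation with the same forcing and initial data $\vt_{\tau-t}$ and $\xi(\tau-t,\omega^{-t})$; by linearity it equals $e^{-At}\bigl(\vt_{\tau-t}-\xi(\tau-t,\omega^{-t})\bigr)$, and the semigroup bound delivers the stated estimate.

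The main obstacle is the first step: convergence and, above all, temperedness of $\xi$. Everything hinges on correctly matching the four competing exponential rates — the semigroup decay $\lambda-\beta$, the weight $\delta$ in \eqref{gh1}--\eqref{gh2}, the arbitrary temperedness rate $c$, and the subexponential growth of $z(\cdot,\omega)$ from \eqref{aspomega} — and it is precisely the strict inequality $\delta<\lambda-\beta$ together with the built-in factor $e^{cr}$ of \eqref{gh2} that make the bookkeeping close. The remaining steps are then essentially formal consequences of the explicit formula, linearity, and the semigroup estimate.
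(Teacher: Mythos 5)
Your proposal is correct and follows essentially the same route as the paper: direct verification of the explicit formula \eqref{lcoms2a1} via the semigroup bound $\|e^{-At}\|\le Me^{-(\lambda-\beta)t}$, the change of variables $z(s,\omega^t)=z(s+t,\omega)$ for the quasi-solution identity, the exponent bookkeeping with \eqref{aspomega}, \eqref{gh1}--\eqref{gh2} and $\delta<\lambda-\beta$ for convergence and temperedness, and periodicity read off from $h^{t+T}=h^t$, $g^{t+T}=g^t$. The only cosmetic difference is that you obtain \eqref{lcoms2a2} (and uniqueness) from the invariance identity $\xi(\tau,\omega)=\vt(\tau,\tau-t,\omega^{-\tau},g,h,\xi(\tau-t,\omega^{-t}))$ plus linearity, whereas the paper subtracts the two variation-of-constants integrals explicitly --- these are the same computation.
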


\begin{proof}
The proof is quite standard, see, e.g., 
\cite{fen1, fen2}.
First, by \eqref{delta1} and  \eqref{gh1}, one can verify
that the integral 
on the right-hand side of  \eqref{lcoms2a1}
is well defined.
 We now show   that $\xi$ is a complete quasi-solution.
 For convenience, we write
 $\varphi (t,x) = h(t,x) + |g(t,x)|$.
 By \eqref{linv} and
  \eqref{lcoms2a1}  we have
  $$ \vt(t, 0, \omega, g^\tau,  h^\tau,   \xi(\tau, \omega))
  =e^{-At} \xi (\tau, \omega)
  + \int_0^t e^{-A (t-s)} z(s, \omega) \varphi^\tau (s, \cdot) ds
  $$
   \be\label{coms1_p21}
  =  \int_{-\infty}^t e^{-A (t-s)} z(s, \omega) \varphi^\tau (s, \cdot) ds
  =  \int_{-\infty}^0 e^{A  s } z(s, \omega^t)
  \varphi^{t +\tau} (s , \cdot) ds.
 \ee
 It follows   from \eqref{lcoms2a1} and \eqref{coms1_p21}  that
 for each $\tau \in \R$,  $t >0$ and $\omega \in  \Omega$,
 $$
   v(t, 0, \omega,\varphi^\tau,  \xi(\tau, \omega))
  =\xi (\tau +t, \omega^t),
  $$
  which   implies    \eqref{colineq} and hence 
   $\xi$  is a complete quasi-solution of problem
  \eqref{lv1}-\eqref{lv3}.
  Next we prove that $\xi$ is tempered.
  Given $c>0$,  let $\nu = {\frac 12} 
  \min \{ c, \  (\lambda -\beta -\delta) \}$.
  Note  that   for each  $\omega \in \Omega$,   there exists
 $s_0 <0$ such that for all  $s \le s_0$,
 \be
 \label{coms1_p23}
 -\alpha \omega (s) \le - \nu  s.
 \ee
 By \eqref{coms1_p23} we have
 for  all $t \le s_0$, $\tau \in \R$ and $\omega \in  
 \Sigma$, 
 $$
 e^{ct} \| \xi(\tau +t, \omega^t) \|_{\cqzero}
 \le e^{ct} \int_{-\infty}^0
 \| e^{As} z(s, \omega^t)
  \varphi^{\tau +t} (s, \cdot) \|_{L^\infty(Q)} ds
 $$
 $$
 \le c_1 e^{ct} \int_{-\infty}^0
 e^{(\lambda -\beta) s} e^{-\alpha \omega (s+t) }
 \| \varphi (s+\tau +t)\|_{L^\infty(Q)} ds
 $$
 $$
 \le c_1 e^{(c-\nu) t} \int_{-\infty}^{0}
 e^{{\frac 12} (\lambda -\beta -\delta) s} e^{\delta s}
 \| \varphi (s+\tau +t)\|_{L^\infty(Q)} ds
 \le c_1 e^{{\frac 12}  c  t} \int_{-\infty}^{0}
  e^{\delta s}
 \| \varphi (s+\tau +t)\|_{L^\infty(Q)} ds,
 $$
 from which   and \eqref{gh2},  we get that 
 for every $c>0$, $\tau \in \R $   and $\omega 
 \in \Omega$, 
  $$
 \lim_{t  \to -\infty}
 e^{ct} \| \xi(\tau +t, \omega^t) \|_{\cqzero}
 =0.
 $$
 Therefore $\xi$ is   tempered.
 We now establish  the stability of  $\xi$.
It follows   from \eqref{linv} 
and \eqref{lcoms2a1}  that,
for every $t >0$, $\tau \in \R$
and $\omega \in  \Omega$,
$$
\vt(\tau, \tau-t, \omega^{-\tau},g,h,  \vt_{\tau-t})
-\xi (\tau, \omega)
=
\vt(t,  0, \omega^{- t},\varphi^{\tau -t},  \vt_{\tau-t})
-\xi (\tau, \omega)
$$
$$
=
e^{-At} \vt_{\tau -t}
+ \int_0^t e^{-A(t-s)} z(s, \omega^{-t}) \varphi^{\tau -t}
(s, \cdot) ds -\xi (\tau, \omega)
$$
\be\label{coms1_p40}
=
e^{-At} \vt_{\tau -t}
- \int_{-\infty} ^{-t}  e^{ A s } z(s, \omega) \varphi^\tau
(s , \cdot) ds .
\ee
On the other hand, by
\eqref{lcoms2a1} we have
$$
e^{-At} 
\xi (\tau-t, \omega^{-t})
=\int_{-\infty}^{0}
e^{A(s-t)} z(s, \omega^{-t}) \varphi^{\tau -t} (s, \cdot) ds
$$
\be
\label{coms1_p41}
=\int_{-\infty}^{-t}
e^{As } z(s+t, \omega^{-t}) \varphi^{\tau -t} (s +t, \cdot) ds
=
\int_{-\infty}^{-t}
e^{As } z(s, \omega) \varphi^{\tau } (s , \cdot) ds.
\ee
By \eqref{coms1_p40}-\eqref{coms1_p41}   we get that, 
for every $t >0$, $\tau \in \R$
and $\omega \in \Omega$,
$$
\|\vt(\tau, \tau-t,  \omega^{-\tau},  g, h, \vt_{\tau-t})
-\xi (\tau, \omega)\|_\cqzero
=
\| e^{-At } (\vt_{\tau-t} - \xi (\tau-t, \omega^{-t}) ) \|_\cqzero
$$
\be\label{coms1_p42} 
\le c_1 e^{-(\lambda - \beta) t}
\left (
 \|\vt_{\tau -t}\|_\cqzero   +  \|  \xi (\tau-t, \omega^{-t})  \|_\cqzero 
 \right ).
	\ee
	Note that the uniqueness of tempered complete quasi-solutions
	of \eqref{lv1}-\eqref{lv3} is implied by
	\eqref{coms1_p42}.  This completes   the proof.
\end{proof}

Next, we establish the global existence of solutions for
the deterministic problem \eqref{v1}-\eqref{v3}.

\begin{lem}
\label{lgsol}
 Suppose   \eqref{f1}-\eqref{f2}   hold. 
 Then for every $\tau  \in \R$, $\omega \in \Omega$ and
  $v_\tau  \in \cqzero$,  the solution 
   $v(t, \tau, \omega, f, g,  v_\tau )$
  of
  problem  \eqref{v1}-\eqref{v3} 
  is defined for all $t \ge \tau$.
  \end{lem}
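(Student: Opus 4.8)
The plan is to combine the local existence theory with a comparison argument that bounds the nonlinear solution by the globally defined solution of the linear equation \eqref{lv1}--\eqref{lv3}, thereby ruling out finite-time blow-up. First I would recall from \cite{mora1} that, under \eqref{f1}, problem \eqref{v1}--\eqref{v3} has a unique mild solution $v$ on a maximal interval $[\tau, \tau + T_{\max})$ obeying the standard continuation alternative: either $T_{\max} = \infty$, or $\|v(t, \tau, \omega, f, g, v_\tau)\|_\cqzero \to \infty$ as $t \to (\tau + T_{\max})^-$. Thus it suffices to exhibit, on every bounded subinterval of $[\tau, \tau + T_{\max})$, an a priori bound on the supremum norm of $v$.

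To produce such a bound I would introduce the barrier $\vt$ given by \eqref{linv}, namely the globally defined solution of the linear problem \eqref{lv1}--\eqref{lv3} with initial datum $\vt_\tau = |v_\tau| \in \cqzero$; replacing $h$ by $h^+$ if necessary (which preserves \eqref{f2}, since $\beta s^2 + h^+(t,x)|s| \ge \beta s^2 + h(t,x)|s|$), we may assume the forcing $z(t,\omega)(h + |g|)$ is nonnegative. Because $A = -\Delta - \beta I$ generates a positivity-preserving semigroup $\{e^{-At}\}_{t \ge 0}$ on $\cqzero$, formula \eqref{linv} shows $\vt(t, \cdot) \ge 0$ for every $t \ge \tau$; moreover $\vt \in C([\tau, \infty), \cqzero)$, so $\vt$ is bounded on each compact time interval.

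The heart of the matter is the comparison estimate $-\vt(t, \cdot) \le v(t, \cdot) \le \vt(t, \cdot)$ on $[\tau, \tau + T_{\max})$, which I would establish by a first-touching-time argument. For the upper inequality, set $w = \vt - v$, note $w \ge 0$ initially and on $\partial Q$, and suppose $w$ first vanishes at some interior point $(t_1, x_1)$ where it attains its spatial minimum; then $v(t_1, x_1) = \vt(t_1, x_1) \ge 0$, so the one-sided bound \eqref{f2_2} gives $z f(t_1, x_1, z^{-1} v) \le \beta v + z h$ while $z g \le z|g|$, whence $w$ satisfies $w_t - \Delta w - \beta w \ge 0$ near the critical point. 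Perturbing the barrier (e.g. enlarging the forcing by $\varepsilon$) to obtain strict inequalities, applying the parabolic maximum principle together with the Hopf lemma to control boundary touching, and letting $\varepsilon \to 0$ then forces $w \ge 0$; the lower inequality $v \ge -\vt$ follows symmetrically from the other branch of \eqref{f2_2} (using $v \le 0$ at the critical point and $z g \ge -z|g|$). I expect this comparison step to be the main obstacle: since $f$ is only locally Lipschitz and satisfies merely the one-sided growth condition \eqref{f2}, one cannot bound $|f|$ directly, so the sign structure must be exploited precisely at the critical point, and some care is needed to justify the maximum principle for the mild solution (either through the parabolic smoothing inherent in \eqref{nonv}, or by running the comparison at the level of the positivity-preserving semigroups in the integral formulation).

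With the comparison in hand the conclusion is immediate. It yields $\|v(t, \cdot)\|_\cqzero \le \|\vt(t, \cdot)\|_\cqzero$ for all $t \in [\tau, \tau + T_{\max})$, and the right-hand side remains bounded on every bounded interval by the second paragraph. This contradicts the blow-up alternative unless $T_{\max} = \infty$, proving that $v(t, \tau, \omega, f, g, v_\tau)$ is defined for all $t \ge \tau$.
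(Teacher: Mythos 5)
Your proposal is correct and follows essentially the same route as the paper: dominate $v$ by the solution $\vt(t,\tau,\omega,g,h,|v_\tau|)$ of the linear problem \eqref{lv1}--\eqref{lv3} via the comparison principle, then bound $\vt$ on $[\tau,T)$ through the variation-of-constants formula \eqref{linv} (using $g,h\in L^1_{loc}(\R,L^\infty(Q))$) to contradict the blow-up alternative. The only difference is that the paper invokes the comparison principle as a standard fact, whereas you additionally sketch its proof (sign analysis at a first touching point using \eqref{f2_2}, replacing $h$ by $h^+$, $\varepsilon$-perturbation and the maximum principle), which is a sound elaboration rather than a deviation.
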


 \begin{proof}
 Suppose $[\tau, T)$ be the maximal interval of
 existence of the solution  
 $v(t, \tau, \omega, f, g,  v_\tau )$ with
 $T<\infty$. We only need to prove that   
   $v(t, \tau, \omega, f, g,  v_\tau )$
    is bounded in $\cqzero$
 for all $t \in [t_0, T)$.  
 Let  $\vt (t, \tau, \omega,  g, h,   |v_\tau| )$
 be the solution of the linear problem
 \eqref{lv1}-\eqref{lv3} with initial condition
 $|v_\tau|$.   
 By the comparison principle we get, 
  	 for all $t \ge \tau $, 
  	\be
  \label{lgsol_p3}
  |v(t, \tau, \omega, f, g,  v_\tau )|
  \le 
  \vt (t, \tau, \omega,  g, h,   |v_\tau| ).
  \ee  
 By \eqref{linv}   we obtain,  for all $t \in [\tau, T)$,
 $$
 \| \vt (t, \tau, \omega,  g, h,   |v_\tau| ) \|_{\cqzero}
\le  \| e^{-A(t-\tau)} |v_\tau| \|_\cqzero
+ \| \int_\tau^t  z(s, \omega) e^{-A(t-s)} 
\left (h(s, \cdot)  + |g(s, \cdot)| 
\right ) ds \|_\cqzero 
$$
$$
\le c e^{-(\lambda -\beta) (t- \tau)}
 \|  v_\tau \|_\cqzero
+ c\int_{\tau}^t  |z(s, \omega)| e^{-(\lambda -\beta) (t-s)} 
  \left ( \|g(s, \cdot)\|_{L^\infty(Q)}
  + \|h(s, \cdot)  \|_{L^\infty(Q)} \right ) ds .
  $$
  Since $g,\ h \in L^1_{loc} (\R, L^\infty(Q))$,   we find
  from the above that there 
   exists  $c_1>0$ such that
    $$
    \| \vt (t, \tau, \omega,  g, h,   |v_\tau| ) \|_{\cqzero}
\le c_1, \quad \mbox{ for all } \  \tau \le t <T,
$$ 
which along with 
 \eqref{lgsol_p3}  concludes
  the proof.   
  \end{proof}
  
  By \eqref{vu}   and Lemma \ref{lgsol},
   we can define  
   a map
       $\Phi: \R^+ \times \R \times \Omega \times \cqzero$
$\to \cqzero$ for      problem
\eqref{rde1}-\eqref{rde3}.
Given $t \in \R^+$,  $\tau \in \R$, $\omega \in \Omega$ 
 and $u_\tau \in \cqzero$,
let 
 \be \label{rdephi}
 \Phi (t, \tau,  \omega, u_\tau) = 
  u (t+\tau,  \tau, \theta_{ -\tau} \omega, 
  f, g, u_\tau) 
  = {\frac 1{z(t+\tau, \theta_{ -\tau} \omega)}}
v(t+\tau, \tau,  \theta_{ -\tau} \omega, g, h,  v_\tau),  
\ee
where $v_\tau = z(\tau, \theta_{ -\tau} \omega) u_\tau $. 
Note that  $v$  is  continuous in $v_\tau$
in $\cqzero$ and  is measurable 
in $\omega \in \Omega$.   One can check   that  $\Phi$ 
is a continuous   cocycle on $\cqzero$ 
over  
$(\Omega, \calf,
P, \{\thtwot \}_{t\in \R})$ in the sense of Definition \ref{ds1}.
By \eqref{vu},  we have  
the following identities which are useful in later sections, 
    for   each $\tau \in \R$, 
 $\omega \in \Omega$ and $t \ge 0$,    
  $$
   u (\tau, \tau -t,  \theta_{ -\tau} \omega, 
f, g, u_{\tau -t}  )
=
 u (0,  -t,   \omega, 
f^\tau, g^\tau, u_{\tau -t}  )   
$$
\be\label{vuid}
=  v (0,  -t,   \omega, 
f^\tau, g^\tau,   z(-t,  \omega) u_{\tau -t}  ) 
  = v (\tau, \tau -t,   \omega^{-\tau}, 
f, g,    z(-t,  \omega) u_{\tau -t}  ) .
  \ee

    Let  $D =\{ D(\tau, \omega): \tau \in \R, \omega \in \Omega \}$  
     be   a  tempered family of
  bounded nonempty   subsets of $\cqzero $,  that is,  
  for every  $c>0$, $\tau \in \R$   and $\omega \in \Omega$, 
 \be
 \label{temd1}
 \lim_{r \to  - \infty} e^{   c  r} 
 \| D( \tau  +r, \theta_{ r} \omega ) \|_\cqzero  =0, 
 \ee 
 where  we have used the notation
  $\| B \|_\cqzero   =   \sup\limits_{u\in B }
   \|u  \|_{\cqzero }$ for a subset $B$ of $\cqzero$. 
From now on,  we  use  $\cald$      to denote
the   collection of all  tempered families of
bounded nonempty  subsets of $\cqzero$, i.e.,
 \be
 \label{temd2}
\cald = \{ 
   D =\{ D(\tau, \omega): \tau \in \R, \omega \in \Omega \}: \ 
 D  \ \mbox{satisfies} \  \eqref{temd1} \} .
\ee
From \eqref{temd2}   we see  that
$\cald$ is  neighborhood closed.

\section{  Tempered Attractors and Periodic Attractors}
\setcounter{equation}{0}

   In this section, we 
   prove the existence of a unique tempered random 
   attractor for  the stochastic problem
   \eqref{rde1}-\eqref{rde3}
   with non-autonomous term $g$.
   In the case where $f$ and $g$ are 
      periodic,  we show   
   the   attractor is also periodic.
    We first  derive 
     uniform 
   estimates of solutions  in $\cqzero$.

  \begin{lem}
\label{est1}
Suppose   \eqref{f1}-\eqref{gh2}   hold. 
Then for every $\tau \in \R$, $\omega \in \Omega$  
 and $D=\{D(\tau, \omega)
: \tau \in \R,  \omega \in \Omega\}  \in \cald$,
 there exists  $T=T(\tau, \omega,  D) \ge 1$  
 such that for all $t \ge T$
 and $r \in [\tau-1, \tau]$, the solution
 $v$ of  problem  \eqref{v1}-\eqref{v3}    satisfies
 $$
\| v (r, \tau -t,   \omega^{-\tau}, 
f, g, v_{\tau -t}  ) \|_\cqzero
 $$
\be\label{est1_a1}
\le M
 +
    M  e^{- (\lambda -\beta) \tau} \int_{-\infty}^\tau
    e^{(\lambda -\beta) s}  z(s,  \omega^{-\tau})
    \left (\| h(s,\cdot)\|_{L^\infty (Q)} 
    +
    \| g(s,\cdot)\|_{L^\infty (Q)}  \right ) ds,
   \ee
   where $ v_{\tau -t} = z(-t, \omega) u_{\tau -t}$
    with $u_{\tau -t} \in  D(\tau -t, \theta_{ -t} \omega )$, 
     and 
  $M$ is a  positive constant 
  depending on $\lambda$ and $\beta$, but
   independent of $\tau$, $\omega$   and $D$.
\end{lem}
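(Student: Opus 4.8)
The plan is to bound the nonlinear solution $v$ by the linear solution $\vt$ through the comparison principle, and then estimate $\vt$ via its variation of constants representation. First I would apply \eqref{lgsol_p3} with the shifted noise $\omega^{-\tau}$ and initial datum $v_{\tau-t}=z(-t,\omega)u_{\tau-t}$ to obtain, pointwise on $\overline{Q}$,
$$
|v(r,\tau-t,\omega^{-\tau},f,g,v_{\tau-t})|\le \vt(r,\tau-t,\omega^{-\tau},g,h,|v_{\tau-t}|),
$$
so it suffices to bound the $\cqzero$-norm of the right-hand side. Using \eqref{linv} I would split $\vt$ into the homogeneous part $e^{-A(r-\tau+t)}|v_{\tau-t}|$ and the inhomogeneous integral $\int_{\tau-t}^r z(s,\omega^{-\tau})e^{-A(r-s)}(h(s,\cdot)+|g(s,\cdot)|)\,ds$, and in both parts invoke the analytic semigroup decay $\|e^{-At}\|_{\cqzero}\le c\,e^{-(\lambda-\beta)t}$, which holds for $A=-\Delta-\beta I$ since $\beta<\lambda$.

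For the inhomogeneous term I would take norms inside the integral and use $z(s,\omega^{-\tau})>0$ together with the semigroup bound, so that the integrand is controlled by $c\,z(s,\omega^{-\tau})e^{-(\lambda-\beta)(r-s)}\big(\|h(s,\cdot)\|_{L^\infty(Q)}+\|g(s,\cdot)\|_{L^\infty(Q)}\big)$. Writing $e^{-(\lambda-\beta)(r-s)}=e^{-(\lambda-\beta)r}e^{(\lambda-\beta)s}$, using $r\in[\tau-1,\tau]$ to replace $e^{-(\lambda-\beta)r}$ by $e^{\lambda-\beta}e^{-(\lambda-\beta)\tau}$, and enlarging the integration interval from $[\tau-t,r]$ to $(-\infty,\tau]$ (legitimate since the integrand is nonnegative) yields precisely the integral term in \eqref{est1_a1} up to a constant. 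Its finiteness follows from \eqref{gh1} and \eqref{aspomega}: the sub-exponential growth of $z$ can be absorbed into the spectral gap $(\lambda-\beta)-\delta>0$ fixed in \eqref{delta1}, reducing the integral to one of the form $\int_{-\infty}^\tau e^{\delta s}(\|h\|+\|g\|)\,ds$.

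The main work, and the step I expect to be the main obstacle, is showing that the homogeneous part becomes negligible for large $t$, which is exactly what forces the threshold $T$. I would bound it by $c\,e^{-(\lambda-\beta)(r-\tau+t)}z(-t,\omega)\|u_{\tau-t}\|_{\cqzero}$ and, using $u_{\tau-t}\in D(\tau-t,\theta_{-t}\omega)$, combine the temperedness \eqref{temd1} of $D$ (so $\|D(\tau-t,\theta_{-t}\omega)\|_{\cqzero}$ grows slower than any exponential in $t$) with \eqref{aspomega} (so $z(-t,\omega)=e^{-\alpha\omega(-t)}=e^{o(t)}$ also grows sub-exponentially). The delicate point is to beat the fixed decay $e^{-(\lambda-\beta)t}$ by splitting its exponent: choosing the temperedness constant $c=\tfrac12(\lambda-\beta)$ and using \eqref{aspomega} to guarantee $|\alpha\omega(-t)|\le\tfrac14(\lambda-\beta)t$ for $t$ large leaves a net factor $e^{-(\lambda-\beta)t/4}\to 0$. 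Since $r-\tau+t\in[t-1,t]$, this decay is uniform in $r\in[\tau-1,\tau]$.

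Consequently the homogeneous term tends to $0$ as $t\to\infty$, so there exists $T=T(\tau,\omega,D)\ge 1$ such that it is at most $1$ for all $t\ge T$ and $r\in[\tau-1,\tau]$. Collecting the two bounds and taking $M$ to be the larger of the constant $c\,e^{\lambda-\beta}$ appearing in front of the integral and the number $1$ dominating the homogeneous term gives \eqref{est1_a1}, with $M$ depending only on $\lambda$ and $\beta$.
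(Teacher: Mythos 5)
Your proposal is correct and follows essentially the same route as the paper's proof: comparison with the linear problem \eqref{lv1}--\eqref{lv3}, the variation of constants formula \eqref{linv} with the semigroup decay $\|e^{-At}\|\le c\,e^{-(\lambda-\beta)t}$, temperedness of $D$ together with \eqref{aspomega} to beat the exponential and make the homogeneous term at most $1$ for $t\ge T$, and enlargement of the forcing integral to $(-\infty,\tau]$ with convergence supplied by \eqref{gh1}. Your explicit exponent-splitting (taking the temperedness rate $c=\tfrac12(\lambda-\beta)$ and $|\alpha\,\omega(-t)|\le\tfrac14(\lambda-\beta)t$ for large $t$) simply fills in details the paper asserts without comment in \eqref{lcoms3_p3} and \eqref{lcoms3_p4}.
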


 \begin{proof}
Given $\tau \in \R$, 
$r \in [\tau -1, \tau]$
and    $ t \ge 1$,  by \eqref{linv} we have
 $$ \| \vt(r, \tau -t, \omega^{-\tau}, g, h, v_{\tau -t} )  \|_\cqzero
\le  \|  e^{-A(r-\tau +t)}v_{\tau-t} \|_\cqzero
$$
$$
+\int_{\tau-t} ^r  z(s, \omega^{-\tau}) \| e^{-A(r-s)} 
\left (h(s, \cdot)  + |g(s, \cdot)| 
\right ) \|_{L^\infty} ds
$$
$$
\le c_1 e^{ -(\lambda -\beta) (r-\tau +t)} \| v_{\tau -t}\|_\cqzero
$$
\be\label{lcoms3_p1}
+ c_1 
e^{ -(\lambda -\beta) r}
\int^r_{\tau -t} e^{ (\lambda -\beta) s} z(s, \omega^{-\tau})
\left (
\| h(s, \cdot)\|_{L^\infty (Q)}
+ \| g(s, \cdot) \|_{L^\infty (Q)}
\right ).
\ee
Since
$v_{\tau -t} = z(-t, \omega) u_{\tau -t}$
    with $u_{\tau -t} \in  D(\tau -t, \theta_{ -t} \omega )$,  
 we find that
  there exists $T=T(\tau, \omega, D) \ge  1$ such that
for all $t \ge T$,
\be\label{lcoms3_p3}
  c_1 e^{ -(\lambda -\beta) (r-\tau +t)} \| v_{\tau -t}\|_\cqzero
\le   1.
\ee
On the other hand, by \eqref{gh1} one can verify that the following
integral is convergent:
\be\label{lcoms3_p4}
\int^\tau_{ -\infty} e^{ (\lambda -\beta) s} z(s, \omega^{-\tau})
\left (
\| h(s, \cdot)\|_{L^\infty (Q)}
+ \| g(s, \cdot) \|_{L^\infty (Q)}
\right )ds <  \infty.
\ee
    It follows  from \eqref{lcoms3_p1}-\eqref{lcoms3_p4} that
    for all $t\ge T$,
     $$
     \| \vt(r, \tau -t, \omega^{-\tau}, g, h, \vt_{\tau -t} )  \|_\cqzero
     $$ 
 $$
\le 1 + 
c_1 e^{ \lambda -\beta } e^{ -(\lambda -\beta) \tau}
\int^\tau_{-\infty} e^{ (\lambda -\beta) s} z(s, \omega^{-\tau})
\left (
\| h(s, \cdot)\|_{L^\infty (Q)}
+ \| g(s, \cdot) \|_{L^\infty (Q)}
\right ),
$$
which along with 
the comparison principle yields the lemma.
\end{proof}

 Based on Lemma \ref{est1},   we have the following
 uniform estimates on solutions of problem \eqref{v1}-\eqref{v3}
 which imply the compactness of the solution operators.
 
    \begin{lem}
\label{est2}
Suppose   \eqref{f1}-\eqref{gh2}   hold
and $g \in L^\infty_{loc} (\R, L^\infty(Q))$. 
Then for every $\tau \in \R$, $\omega \in \Omega$  
 and $D=\{D(\tau, \omega)
: \tau \in \R,  \omega \in \Omega\}  \in \cald$,
 there exists  $T=T(\tau, \omega,  D) \ge 1$  
 such that for all $t \ge T$ and $\gamma \in [0, 1)$, 
 the solution
 $v$ of  problem  \eqref{v1}-\eqref{v3}    satisfies
 $$
 \| A_0^\gamma 
 v(\tau, \tau -t,  \omega^{-\tau}, f, g, v_{\tau -t})\|_\cqzero
 \le C,
 $$
 where $ v_{\tau -t} = z(-t, \omega) u_{\tau -t}$
    with $u_{\tau -t} \in  D(\tau -t, \theta_{ -t} \omega )$,
 $A_0 = -\Delta$ with homogeneous 
 Dirichlet boundary condition, 
 and
 $C$ is a positive number depending on $\tau$
 and $\omega$.
\end{lem}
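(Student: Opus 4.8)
The plan is to exploit the parabolic smoothing of the analytic semigroup $\{e^{\Delta t}\}_{t\ge 0}=\{e^{-A_0 t}\}_{t\ge 0}$ together with the uniform $\cqzero$-bound already furnished by Lemma \ref{est1}. First I would invoke Lemma \ref{est1} to produce, for all $t\ge T=T(\tau,\omega,D)$, a constant $R=R(\tau,\omega)>0$ with
$$
\| v(r, \tau -t,   \omega^{-\tau}, f, g, v_{\tau -t}  ) \|_\cqzero \le R \qquad \text{for all } r\in[\tau-1,\tau].
$$
This is the only place where the pullback structure and the integrability hypotheses \eqref{gh1}--\eqref{gh2} enter; after this step the argument becomes a purely local-in-time regularity estimate on the fixed interval $[\tau-1,\tau]$.

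Next I would restart the variation of constants formula \eqref{nonv} at the time $\tau-1$ rather than at $\tau-t$, writing (legitimately, by uniqueness of solutions, with the abbreviation $v(\cdot)=v(\cdot,\tau-t,\omega^{-\tau},f,g,v_{\tau-t})$)
$$
v(\tau) = e^{\Delta} v(\tau-1) + \int_{\tau-1}^\tau z(s,\omega^{-\tau})\, e^{\Delta(\tau-s)}\left( f(s,\cdot, z^{-1}(s,\omega^{-\tau}) v(s)) + g(s,\cdot)\right) ds .
$$
Applying the fractional power $A_0^\gamma$ and taking $\cqzero$-norms, I would use the analytic-semigroup estimate $\|A_0^\gamma e^{\Delta}\|_{\mathcal{L}(\cqzero)}\le c$ for the first term (the elapsed time being fixed at $1$), and the singular smoothing bound $\|A_0^\gamma e^{\Delta(\tau-s)}\|_{\mathcal{L}(\cqzero)}\le c\,(\tau-s)^{-\gamma}$ inside the integral.

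The heart of the matter, and the step I expect to be the main obstacle, is controlling the forcing $f(s,\cdot,z^{-1}v(s))+g(s,\cdot)$ in $L^\infty(Q)$ uniformly for $s\in[\tau-1,\tau]$. Here I would combine three facts: the bound $\|v(s)\|_\cqzero\le R$ from the first step; the boundedness of $z^{-1}(s,\omega^{-\tau})=e^{\alpha\omega^{-\tau}(s)}$ on the compact window $[\tau-1,\tau]$, which confines the argument $z^{-1}(s,\omega^{-\tau})v(s)$ to a fixed bounded interval; and the structural condition $f(t,x,0)=0$ in \eqref{f2} together with the local Lipschitz estimate \eqref{f1}, which then yield $|f(s,x,z^{-1}v(s))|\le L\,|z^{-1}v(s)|\le c$. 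The term $\|g(s,\cdot)\|_{L^\infty(Q)}$ stays bounded precisely because of the extra hypothesis $g\in L^\infty_{loc}(\R,L^\infty(Q))$; mere $L^1_{loc}$ integrability would be insufficient once $g$ is paired with the singular kernel $(\tau-s)^{-\gamma}$.

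Finally, since $\int_{\tau-1}^\tau (\tau-s)^{-\gamma}\,ds = (1-\gamma)^{-1}<\infty$ for every $\gamma\in[0,1)$, the two contributions combine to give $\|A_0^\gamma v(\tau)\|_\cqzero\le c(R+c)\le C$, with $C$ depending on $\tau$ and $\omega$ (and on $\gamma$ through the harmless factor $(1-\gamma)^{-1}$, since $\gamma$ is fixed in the compactness application). In short, the proof reduces everything to the fixed interval $[\tau-1,\tau]$, transfers the $\cqzero$-bound through the local Lipschitz property of $f$ while keeping the random factor $z^{-1}$ under control, and then closes the estimate using the integrable singularity of the smoothing kernel.
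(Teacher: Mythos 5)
Your proposal is correct and follows essentially the same route as the paper's proof: both restart the variation of constants formula \eqref{nonv} at time $\tau-1$, bound the nonlinearity in $L^\infty(Q)$ on the window $[\tau-1,\tau]$ by combining the uniform estimate of Lemma \ref{est1} with the local Lipschitz condition \eqref{f1} (and the continuity of $z^{\pm 1}(s,\omega^{-\tau})$ in $s$), and close via the smoothing bound $\|A_0^\gamma e^{-A_0(\tau-s)}\|_{\mathcal{L}(\cqzero)}\le c\,(\tau-s)^{-\gamma}$ together with $\int_{\tau-1}^\tau(\tau-s)^{-\gamma}\,ds<\infty$. Your observation that $g\in L^\infty_{loc}(\R, L^\infty(Q))$ is precisely what makes the singular kernel harmless matches the paper's final estimate involving $\|g\|_{L^\infty((\tau-1,\tau),L^\infty(Q))}$ exactly.
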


\begin{proof}  
Note   that $z(s, \omega) =e^{-\alpha \omega (s)}$ and 
  $z^{-1}(s, \omega) =e^{\alpha \omega (s)}$
  are both continuous in $s \in \R$.
  Therefore, by 
  \eqref{f1}  and
  \eqref{est1_a1} we find that,
  for every $\tau \in \R$, 
  $\omega \in \Omega$, 
  $s \in [\tau -1, \tau]$
  and $t \ge T$, 
  \be\label{est1_p2}
 z(s, \omega^{-\tau} )
 |f \left ( s, \cdot , z^{-1} (s,\omega^{-\tau} ) \;
   v \left ( s,  \tau -t,  \omega^{-\tau}, f, g, v_{\tau -t}
 \right )
  \right  ) |
  \le c_1,
  \ee 
  where $c_1 =c_1(\tau, \omega)$ is a positive number.
 By \eqref{nonv} we get, for each
 $\tau \in \R$, $\omega \in \Omega$ and
 $t\ge1$,
 $$
 v(\tau, \tau -t,  \omega^{-\tau}, f, g, v_{\tau -t})
 = v \left ( \tau, \tau - 1,  \omega^{-\tau}, f, g,  
 v(\tau -1,  \tau -t,  \omega^{-\tau}, f, g, v_{\tau -t})
 \right )
 $$
 $$
 =e^{-A_0}   
 v(\tau -1,  \tau -t,  \omega^{-\tau}, f, g, v_{\tau -t})
 $$
 \be\label{est1_p3}
+\int_{\tau-1}^\tau z(s, \omega^{-\tau} )
e^{-A_0 (\tau -s) }
\left (
 f \left ( s, \cdot , z^{-1} (s,\omega^{-\tau} ) \;
   v \left ( s,  \tau -t,  \omega^{-\tau}, f, g, v_{\tau -t}
 \right )
  \right  ) 
  + g(s, \cdot)
\right ) ds.
\ee
It follows from 
\eqref{est1_a1}, 
\eqref{est1_p2}
and \eqref{est1_p3}
that there exists $T=T(\tau, \omega, D) \ge 1$ such that
 for each
 $\gamma \in [0, 1)$, 
 $\tau \in \R$, $\omega \in \Omega$ and
 $t \ge T$,
 $$
 \| A_0^\gamma 
 v(\tau, \tau -t,  \omega^{-\tau}, f, g, v_{\tau -t})\|_\cqzero
 $$
 $$
 \le 
 \| A_0^\gamma e^{-A_0}   
 v(\tau -1,  \tau -t,  \omega^{-\tau}, f, g, v_{\tau -t})
 \|_\cqzero
 $$
$$
+\int_{\tau-1}^\tau z(s, \omega^{-\tau} )
 \| A_0^\gamma e^{-A_0 (\tau -s) }
\left (
 f \left ( s, \cdot , z^{-1} (s,\omega^{-\tau} ) \;
   v \left ( s,  \tau -t,  \omega^{-\tau}, f, g, v_{\tau -t}
 \right )
  \right  ) 
  + g(s, \cdot)
\right ) \|_{L^\infty(Q)}  
$$
$$
 \le 
 c_2  e^{-\lambda} 
 \|v(\tau -1,  \tau -t,  \omega^{-\tau}, f, g, v_{\tau -t})
 \|_\cqzero
 $$
$$
+ c_2 \int_{\tau-1}^\tau 
e^{ -\lambda (\tau -s)}
{\frac {z(s, \omega^{-\tau} )}{ (\tau -s)^\gamma}}
\left (
 \| f \left ( s, \cdot , z^{-1} (s,\omega^{-\tau} ) \;
   v \left ( s,  \tau -t,  \omega^{-\tau}, f, g, v_{\tau -t}
 \right )
  \right  )  \|_\cqzero 
  +  \| g(s, \cdot)\|_{L^\infty (Q)} 
\right )   
$$
$$
\le c_3 +
c_3  (1 + 
\| g \|_{L^\infty ((\tau -1, \tau), L^\infty (Q) )}  )
 \int_{\tau-1}^\tau 
 (\tau -s)^{-\gamma} ds.
$$
This completes    the proof.
\end{proof}

Let $D(A_0^\gamma)$ be the domain of $A_0^\gamma$
with $\gamma >0$. Then we know that for each
$\gamma>0$,  the embedding 
$D(A_0^\gamma) \hookrightarrow \cqzero$
is compact. This along with Lemma \ref{est2}
 immediately implies    the pullback asymptotic
 compactness of solutions of problem \eqref{v1}-\eqref{v3}
 in $\cqzero$ as stated below.
 
   \begin{lem}
\label{est3}
Suppose   \eqref{f1}-\eqref{gh2}   hold
and $g \in L^\infty_{loc} (\R, L^\infty(Q))$. 
Then for every $\tau \in \R$, $\omega \in \Omega$, 
 $D=\{D(\tau, \omega)
: \tau \in \R,  \omega \in \Omega\}  \in \cald$ and 
$t_n \to \infty$,
 $v_{0,n} = z(-t_n, \omega) u_{0,n}$ with 
  $u_{0,n} \in D(\tau -t_n, \theta_{ -t_n} \omega )$,  the sequence
 $v(\tau, \tau -t_n,   \omega^{-\tau}, f, g,   v_{0,n}  ) $ 
 of solutions of problem \eqref{v1}-\eqref{v3}
   has a
   convergent
subsequence in   $\cqzero$.
\end{lem}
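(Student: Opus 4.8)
The plan is to combine the uniform fractional-power bound furnished by Lemma \ref{est2} with the compact embedding $D(A_0^\gamma) \hookrightarrow \cqzero$. Since the entire analytic effort has already been expended in deriving the a priori estimate of Lemma \ref{est2}, what remains is a standard compactness argument, and I do not anticipate any genuine obstacle; the only point requiring attention is to match the initial data of the present statement with that of Lemma \ref{est2}.

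First I would fix any exponent $\gamma \in (0,1)$. Given $\tau \in \R$, $\omega \in \Omega$ and $D \in \cald$, Lemma \ref{est2} provides a time $T = T(\tau, \omega, D) \ge 1$ and a constant $C = C(\tau, \omega)$ such that
\[
\| A_0^\gamma\, v(\tau, \tau - t, \omega^{-\tau}, f, g, v_{\tau-t}) \|_\cqzero \le C
\qquad \text{for every } t \ge T,
\]
whenever $v_{\tau-t} = z(-t, \omega) u_{\tau-t}$ with $u_{\tau-t} \in D(\tau - t, \theta_{-t}\omega)$. Setting $t = t_n$, $u_{\tau-t} = u_{0,n}$ and hence $v_{\tau-t} = v_{0,n}$ identifies the data in the present statement with those in Lemma \ref{est2}. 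Because $t_n \to \infty$, there is $N$ with $t_n \ge T$ for all $n \ge N$, so the tail $\{ v(\tau, \tau - t_n, \omega^{-\tau}, f, g, v_{0,n}) \}_{n \ge N}$ is a bounded sequence in $D(A_0^\gamma)$ (the bound $C$ being uniform in $n \ge N$); dropping the finitely many terms with $n < N$ does not affect the existence of a convergent subsequence.

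Then I would invoke the compactness of the embedding $D(A_0^\gamma) \hookrightarrow \cqzero$, recalled just before the statement and valid for every $\gamma > 0$. A bounded subset of $D(A_0^\gamma)$ is therefore precompact in $\cqzero$, so the bounded sequence above admits a subsequence converging in the supremum norm of $\cqzero$. This yields the desired convergent subsequence of $\{ v(\tau, \tau - t_n, \omega^{-\tau}, f, g, v_{0,n}) \}$ and completes the argument.
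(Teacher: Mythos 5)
Your proposal is correct and follows exactly the paper's route: the paper deduces Lemma \ref{est3} directly from the uniform bound on $A_0^\gamma v$ in Lemma \ref{est2} combined with the compactness of the embedding $D(A_0^\gamma) \hookrightarrow \cqzero$, and your only addition is to spell out the routine step of discarding the finitely many indices with $t_n < T$. No gaps; nothing further is needed.
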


   W now prove the existence of
         closed measurable
 $\cald$-pullback absorbing sets   for problem \eqref{rde1}-\eqref{rde3}.
 
  \begin{lem}
 \label{est7}
  Suppose   \eqref{f1}-\eqref{gh2}   hold. 
 Then the continuous cocycle $\Phi$ associated with
 problem \eqref{rde1}-\eqref{rde3} has a closed measurable
 $\cald$-pullback absorbing set
 $K =\{ K(\tau, \omega): \tau \in \R,  \omega \in \Omega\}$
 $\in \cald$ which is given by
  $$
 K(\tau, \omega) = \{ u\in  \cqzero : \|u \|_\cqzero
 \le  R(\tau, \omega) \}  
$$
 with
$$
R(\tau, \omega) = 
M+  Mz^{-1} (\tau,  \theta_{ -\tau} \omega)
  \int_{-\infty}^\tau
    e^{(\lambda -\beta) (s-\tau)}  z(s,  \theta_{ -\tau}\omega )
    \left (\| h(s,\cdot)\|_{L^\infty (Q)} 
    +
    \| g(s,\cdot)\|_{L^\infty (Q)}  \right ) ds,
  $$
  where  $M$ is a  positive number  depending on
  $\lambda$ and $\beta$, but independent of 
  $\tau, \omega$  and $D$.
  \end{lem}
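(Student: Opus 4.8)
The plan is to verify, in turn, the three requirements for $K$ to be a closed measurable $\cald$-pullback absorbing set lying in $\cald$: the pullback absorption, closedness together with measurability, and membership in $\cald$ (temperedness). The absorption is essentially a transcription of Lemma \ref{est1} from the transformed variable $v$ back to the cocycle $\Phi$. Fix $D\in\cald$, $\tau\in\R$, $\omega\in\Omega$, and take $u_{\tau-t}\in D(\tau-t,\theta_{-t}\omega)$. By the definition \eqref{rdephi} of the cocycle and the identity \eqref{vuid}, one has $\Phi(t,\tau-t,\theta_{-t}\omega,u_{\tau-t})=u(\tau,\tau-t,\theta_{-\tau}\omega,f,g,u_{\tau-t})=v(\tau,\tau-t,\omega^{-\tau},f,g,v_{\tau-t})$ with $v_{\tau-t}=z(-t,\omega)u_{\tau-t}$, using $\theta_{-(\tau-t)}\theta_{-t}\omega=\theta_{-\tau}\omega$. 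Applying Lemma \ref{est1} with $r=\tau\in[\tau-1,\tau]$ then bounds the $\cqzero$-norm of this quantity by the right-hand side of \eqref{est1_a1} for all $t\ge T(\tau,\omega,D)$.

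It remains to identify that bound with $R(\tau,\omega)$. The only discrepancy is that \eqref{est1_a1} is written with $z(s,\omega^{-\tau})$ and prefactor $e^{-(\lambda-\beta)\tau}$, whereas $R$ carries $z(s,\theta_{-\tau}\omega)$ and prefactor $z^{-1}(\tau,\theta_{-\tau}\omega)$. Since $z(t,\omega)=e^{-\alpha\omega(t)}$, $\theta_{-\tau}\omega(s)=\omega(s-\tau)-\omega(-\tau)$ and $\omega^{-\tau}(s)=\omega(s-\tau)$, a one-line computation gives $z^{-1}(\tau,\theta_{-\tau}\omega)\,z(s,\theta_{-\tau}\omega)=e^{-\alpha\omega(s-\tau)}=z(s,\omega^{-\tau})$, which together with $e^{-(\lambda-\beta)\tau}e^{(\lambda-\beta)s}=e^{(\lambda-\beta)(s-\tau)}$ shows the two expressions coincide. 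Hence $\|\Phi(t,\tau-t,\theta_{-t}\omega,u_{\tau-t})\|_\cqzero\le R(\tau,\omega)$ for all $t\ge T$, that is, $\Phi(t,\tau-t,\theta_{-t}\omega,D(\tau-t,\theta_{-t}\omega))\subseteq K(\tau,\omega)$, which is the pullback absorption. Closedness is immediate, as each $K(\tau,\omega)$ is a closed ball in $\cqzero$; and measurability of $K$ reduces to measurability of $\omega\mapsto R(\tau,\omega)$, which follows from that of $\omega\mapsto z(s,\omega)$ and a routine argument for measurability of the integral.

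The only genuinely technical point is that $K\in\cald$, i.e.\ that $R$ satisfies the temperedness condition \eqref{temd1}. After the substitution $s\mapsto s-\tau$ and the identity above, $R(\tau,\omega)=M+M\int_{-\infty}^0 e^{(\lambda-\beta)s}z(s,\omega)\big(\|h(s+\tau,\cdot)\|_{L^\infty(Q)}+\|g(s+\tau,\cdot)\|_{L^\infty(Q)}\big)\,ds$, so $R$ is $M$ plus $M$ times a scalar integral of exactly the form that controls $\|\xi(\tau,\omega)\|_\cqzero$ in Lemma \ref{lcoms2}. I would therefore repeat the temperedness computation of that lemma: fix $c>0$, set $\nu=\tfrac12\min\{c,\lambda-\beta-\delta\}$, use \eqref{aspomega} to bound $-\alpha\omega(s+r)\le-\nu(s+r)$ and $e^{\alpha\omega(r)}\le e^{-\alpha\nu_1 r}$ for $-r$ large, replace $e^{(\lambda-\beta-\nu)s}$ by $e^{\delta s}$ via $\lambda-\beta-\nu>\delta$ and $s\le0$, and finally apply \eqref{gh2} to the resulting integral. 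Choosing the auxiliary constants so the net exponent in $r$ stays strictly positive yields $\lim_{r\to-\infty}e^{cr}R(\tau+r,\theta_r\omega)=0$, which is \eqref{temd1}. The main obstacle is precisely this exponential bookkeeping, balancing the sublinear growth of the Wiener path against the decay furnished by \eqref{gh2}; everything else is either elementary or a direct quotation of Lemma \ref{est1}.
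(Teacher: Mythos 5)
Your proposal is correct and follows essentially the same route as the paper's proof: pullback absorption via Lemma \ref{est1} combined with \eqref{vuid} and the identity $z^{-1}(\tau,\theta_{-\tau}\omega)\,z(s,\theta_{-\tau}\omega)=z(s,\omega^{-\tau})$, closedness and measurability through the scalar radius $R(\tau,\cdot)$, and temperedness by playing the sublinear growth \eqref{aspomega} of the Wiener path against \eqref{gh2}. One bookkeeping caveat: because two Wiener factors $e^{\alpha\omega(r)}$ and $e^{-\alpha\omega(s+r)}$ appear here (unlike the single factor in Lemma \ref{lcoms2}), your choice $\nu=\frac12\min\{c,\lambda-\beta-\delta\}$ can leave the net exponent $c-2\nu$ equal to zero when $c\le\lambda-\beta-\delta$, which is why the paper takes $\varepsilon=\min\{\lambda-\beta-\delta,\frac14 c\}$ so that $c-2\varepsilon\ge\frac12 c>0$ --- precisely the adjustment your closing clause about keeping the net exponent strictly positive already calls for.
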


\begin{proof}
   By \eqref{vuid}  and    Lemma \ref{est1},  
  there  exists $T=T(\tau, \omega, D)  \ge 1$  such that   for   all
  $t \ge T$,     
  $$
  \| u (\tau, \tau -t,  \theta_{ -\tau} \omega, 
f, g, u_{\tau -t}  ) \|_\cqzero
= \|v (\tau, \tau -t,   \omega^{-\tau}, 
f, g,    z(-t,  \omega) u_{\tau -t}  ) \|_\cqzero
  $$
$$
\le M
 +
    M    \int_{-\infty}^\tau
    e^{(\lambda -\beta) (s-\tau) }  z(s,  \omega^{-\tau})
    \left (\| h(s,\cdot)\|_{L^\infty (Q)} 
    +
    \| g(s,\cdot)\|_{L^\infty (Q)}  \right ) ds
  $$
    $$
   \le
M +
    M z^{-1} (\tau, \theta_{ -\tau} \omega)   \int_{-\infty}^\tau
    e^{(\lambda -\beta) (s-\tau) }  z(s, \theta_{ -\tau}  \omega )
    \left (\| h(s,\cdot)\|_{L^\infty (Q)} 
    +
    \| g(s,\cdot)\|_{L^\infty (Q)}  \right ) ds,
    $$
    from which we get for all $t \ge T$, 
  $$
  \Phi (t, \tau-t, \theta_{ -t} \omega, D(\tau -t, \theta_{ -t} \omega ))
   \subseteq K(\tau, \omega).
  $$
   Note that, 
   for each $\tau \in \R$,
   $K(\tau, \cdot): \Omega \to 2^H$ is a  measurable set-valued mapping
because   $R(\tau, \cdot): \Omega \to \R$
is   $(\calf, \calb(\R))$-measurable.
Next, we  prove  $K$ is tempered which will complete
 the proof.
 Actually,
  for each $\tau \in \R$, $\omega \in \Omega$ and $r<0$,
     we have
   $$ \| K(\tau +  r, \theta_{ r} \omega) \|_\cqzero
   $$
   $$
  \le 
  M + M z^{-1}  (\tau +r, \theta_{-\tau} \omega )
    \int_{-\infty} ^{\tau +r}
   e^{ (\lambda -\beta) (s -\tau - r)} z(s, \theta_{ -\tau} \omega)
    \left (\| h(s,\cdot)\|_{L^\infty (Q)} 
    +
    \| g(s,\cdot)\|_{L^\infty (Q)}  \right ) ds
   $$
    $$
  \le 
  M + M  e^{\alpha \omega (r)}
    \int_{-\infty} ^{\tau +r}
   e^{ (\lambda -\beta) (s -\tau - r)} e^{-\alpha \omega (s-\tau) }
    \left (\| h(s,\cdot)\|_{L^\infty (Q)} 
    +
    \| g(s,\cdot)\|_{L^\infty (Q)}  \right ) ds
   $$
    \be\label{pest7_3}
  \le 
  M + M  e^{\alpha \omega (r)}
    \int_{-\infty} ^{0}
   e^{ (\lambda -\beta- \delta) s } e^{-\alpha \omega (s +r) }
   e^{\delta s}
    \left (\| h(s+\tau +r ,\cdot)\|_{L^\infty (Q)} 
    +
    \| g(s+\tau +r ,\cdot)\|_{L^\infty (Q)}  \right ) ds.
   \ee
   Given a positive number  $c$,   
  let  
  \be\label{pest7_3a1}
  \varepsilon 
  =\min \{ \lambda - \beta  -\delta,  \ {\frac 14} c\}.
  \ee
  By \eqref{aspomega} we see   that  there
  exists $N_1 <0$  such that  
  \be\label{pest7_4}
  |   \alpha  \ \omega (r) |
  \le -\varepsilon r \quad
  \mbox{ for all }  \ r \le  N_1.
  \ee
  By \eqref{pest7_3}-\eqref{pest7_4}   we have,
  for  all  $r \le N_1$, 
  $$ 
  \| K(\tau +  r, \theta_{ r} \omega) \|_\cqzero
   $$
   $$
  \le 
  M + M  e^{-2 \varepsilon  r}
    \int_{-\infty} ^{0}
   e^{ (\lambda -\beta- \delta -\varepsilon) s }  
   e^{\delta s}
    \left (\| h(s+\tau +r ,\cdot)\|_{L^\infty (Q)} 
    +
    \| g(s+\tau +r ,\cdot)\|_{L^\infty (Q)}  \right ) ds
   $$
    \be\label{pest7_8}
  \le 
  M + M  e^{- {\frac 12} cr }
    \int_{-\infty} ^{0} 
   e^{\delta s}
    \left (\| h(s+\tau +r ,\cdot)\|_{L^\infty (Q)} 
    +
    \| g(s+\tau +r ,\cdot)\|_{L^\infty (Q)}  \right ) ds.
   \ee
  By  \eqref{pest7_8}  and  \eqref{gh2}  we find,   
 for every positive constant $c$,
 $$
 \lim_{r \to -  \infty}
 e^{c  r} \| K(\tau +  r, \theta_{ r} \omega) \|_\cqzero
 =0,
 $$
 and hence 
 $K = \{ K(\tau, \omega): \tau \in \R, \omega \in \Omega \}$
 is tempered, as desired. 
  \end{proof}

  We now present    the $\cald$-pullback   asymptotic 
  compactness of  problem
  \eqref{rde1}-\eqref{rde3}.

 \begin{lem}
 \label{est8}
Suppose   \eqref{f1}-\eqref{gh2}   hold
and $g \in L^\infty_{loc} (\R, L^\infty(Q))$. 
Then the continuous cocycle $\Phi$ associated with
 problem \eqref{rde1}-\eqref{rde3}  
  is $\cald$-pullback
 asymptotically compact  in $\cqzero$,
that is, for  every $\tau \in \R$, $\omega \in \Omega$, 
 $D=\{D(\tau, \omega): \tau \in \R, \omega \in \Omega \}$
 $  \in \cald$,
and $t_n \to \infty$,
 $u_{0,n}  \in D(\tau -t_n, \theta_{ -t_n} \omega )$,  the sequence
 $\Phi(t_n, \tau -t_n,  \theta_{ -t_n} \omega,   u_{0,n}  ) $   has a
   convergent
subsequence in $\cqzero $.
\end{lem}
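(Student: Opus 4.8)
The plan is to reduce the assertion to Lemma \ref{est3}, which already establishes pullback asymptotic compactness for the transformed pathwise problem \eqref{v1}-\eqref{v3} in $\cqzero$. All of the genuine analytic content — the smoothing estimate on $A_0^\gamma v$ from Lemma \ref{est2} together with the compact embedding $D(A_0^\gamma) \hookrightarrow \cqzero$ — has already been proved; the remaining task is purely the bookkeeping of the cocycle shifts and of the multiplier $z$ when passing between the original variable $u$ and the conjugated variable $v = zu$.

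First I would unwind the definition \eqref{rdephi} of $\Phi$. Taking $t = t_n$ there, replacing $\tau$ by $\tau - t_n$ and $\omega$ by $\theta_{-t_n}\omega$, and using the group law $\theta_{-(\tau-t_n)}\theta_{-t_n} = \theta_{-\tau}$, one finds
$$
\Phi(t_n, \tau - t_n, \theta_{-t_n}\omega, u_{0,n})
= u(\tau, \tau - t_n, \theta_{-\tau}\omega, f, g, u_{0,n}).
$$
Next I would apply the identity \eqref{vuid} with $t = t_n$ and $u_{\tau - t_n} = u_{0,n}$ to rewrite this $u$-solution as a $v$-solution, obtaining
$$
\Phi(t_n, \tau - t_n, \theta_{-t_n}\omega, u_{0,n})
= v(\tau, \tau - t_n, \omega^{-\tau}, f, g, z(-t_n,\omega)\,u_{0,n}).
$$
Setting $v_{0,n} = z(-t_n,\omega)\,u_{0,n}$, this is exactly the sequence treated in Lemma \ref{est3}, and the hypothesis $u_{0,n} \in D(\tau - t_n, \theta_{-t_n}\omega)$ coincides with the hypothesis imposed there.

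Finally, since \eqref{f1}-\eqref{gh2} and $g \in L^\infty_{loc}(\R, L^\infty(Q))$ are all in force, Lemma \ref{est3} guarantees that $\{v(\tau, \tau - t_n, \omega^{-\tau}, f, g, v_{0,n})\}$ has a subsequence converging in $\cqzero$; transporting this back through the displayed equality yields a convergent subsequence of $\{\Phi(t_n, \tau - t_n, \theta_{-t_n}\omega, u_{0,n})\}$, which is precisely the $\cald$-pullback asymptotic compactness claimed. I do not expect any real obstacle here: the sole point requiring care is the consistent tracking of the translation operators $\theta$ and $\omega^{-\tau}$ and of the factor $z(-t_n,\omega)$, so that the sequence in the statement is matched verbatim with the one covered by Lemma \ref{est3}.
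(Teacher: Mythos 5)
Your proposal is correct and follows exactly the paper's own argument: the paper proves Lemma \ref{est8} in one line as ``an immediate consequence of equality \eqref{vuid} and Lemma \ref{est3}'', and your write-up simply makes explicit the bookkeeping of the cocycle definition \eqref{rdephi}, the group law for $\theta$, and the factor $z(-t_n,\omega)$ that the paper leaves implicit. There is nothing to add or correct.
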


\begin{proof}
This lemma is an immediate consequence of
equality 
\eqref{vuid} and Lemma \ref{est3}.
\end{proof}

As mentioned before,   if $\Phi$ is 
$\cald$-pullback
 asymptotically compact,  then the 
   attractor   $\cala$ of $\Phi$
is $(\calf, \calb (X))-$measurable 
as  proved   in
\cite{wan8}.  The    measurability of $\cala$
  with respect to 
   the $P$-completion
 of $\calf$ was proved    in \cite{wan4}.
 The author is also referred to
 \cite{cra1, fla1, schm1}
 for measurability of random attractors.

  We  are now   ready to prove  
    the existence of 
   pullback attractors for
  problem \eqref{rde1}-\eqref{rde3}.  
  
  \begin{thm}
\label{thmrde1}
Suppose   \eqref{f1}-\eqref{gh2}   hold
and $g \in L^\infty_{loc} (\R, L^\infty(Q))$. 
Then the continuous cocycle $\Phi$ associated with
 problem \eqref{rde1}-\eqref{rde3}  
   has a unique $\cald$-pullback attractor $\cala
   =\{\cala(\tau, \omega):
      \tau \in \R, \ \omega \in \Omega \} \in \cald$
 in $\cqzero$.  Furthermore,
 for each $\tau  \in \R$   and
$\omega \in \Omega$,
$$
\cala (\tau, \omega)
=\Omega(K, \tau, \omega)
=\bigcup_{B \in \cald} \Omega(B, \tau, \omega)
$$
$$
 =\{\psi(0, \tau, \omega): \psi \mbox{ is a   }  \cald {\rm -}
 \mbox{complete solution  of } \Phi\} ,
$$
$$
 =\{\xi( \tau, \omega): \xi \mbox{ is a   }  \cald {\rm -}
 \mbox{complete  quasi-solution  of } \Phi\} ,
$$
where $K$ is the closed measurable $\cald$-pullback  absorbing set
of $\Phi$ given by Lemma \ref{est7}.

If,  in addition, 
   there exists $T>0$ such that
  for all  $     t \in \R, \ x \in Q$
   and   $s \in \R$,
  \be
  \label{perso1}
  f(t + T, x , s ) =  f(t,x,s) ,
  \ g(t+T,x ) = g(t,x)
  \ \mbox{ and } \  h(t+T,x ) = h(t,x),
  \ee
  then the attractor $\cala$ is $T$-periodic, 
  i.e., $\cala(\tau +T, \omega) =\cala(\tau, \omega)$.
\end{thm}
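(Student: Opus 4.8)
The plan is to obtain the first three assertions as a direct application of Proposition~\ref{att}, and then to verify the two periodicity properties needed for the last statement.

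First I would collect the hypotheses of Proposition~\ref{att}. The collection $\cald$ defined in \eqref{temd2} is inclusion-closed (as noted right after \eqref{temd2}), and $\Phi$ was shown in Section~3 to be a continuous cocycle on $\cqzero$ over $(\Omega, \calf, P, \{\thtwot\}_{t\in\R})$. Lemma~\ref{est7} provides a closed measurable $\cald$-pullback absorbing set $K=\{K(\tau,\omega):\tau\in\R,\ \omega\in\Omega\}\in\cald$, and Lemma~\ref{est8} gives the $\cald$-pullback asymptotic compactness of $\Phi$ (here the extra assumption $g\in L^\infty_{loc}(\R, L^\infty(Q))$ is used, exactly as in Lemmas~\ref{est2} and \ref{est3}). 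With these four facts in hand, Proposition~\ref{att} yields at once the existence and uniqueness of the $\cald$-pullback attractor $\cala\in\cald$ together with the three displayed characterizations of $\cala(\tau,\omega)$ as an $\Omega$-limit set and in terms of $\cald$-complete solutions and $\cald$-complete quasi-solutions. The $(\calf,\calb(\cqzero))$-measurability of $\cala$ is precisely the content of the Remark following Proposition~\ref{att}.

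It remains to prove the periodicity claim under \eqref{perso1}. By the last sentence of Proposition~\ref{att}, it suffices to show that both $\Phi$ and $K$ are $T$-periodic, i.e.\ $\Phi(t,\tau+T,\omega,\cdot)=\Phi(t,\tau,\omega,\cdot)$ and $K(\tau+T,\omega)=K(\tau,\omega)$ for all $t\ge0$, $\tau\in\R$ and $\omega\in\Omega$. For $\Phi$, the key observation is that for fixed $\omega$ the two shifted paths $s\mapsto(\theta_{-\tau-T}\omega)(s+T)$ and $s\mapsto(\theta_{-\tau}\omega)(s)$ differ only by the time-independent constant $\omega(-\tau)-\omega(-\tau-T)$, hence carry identical increments; therefore the Stratonovich driving term in \eqref{rde1} is unchanged when the initial time, the evaluation time and the noise are simultaneously translated by $T$. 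Combining this with the $T$-periodicity of $f$ and $g$ in \eqref{perso1} and the uniqueness of solutions, one checks that $w(s):=u(s+T,\tau+T,\theta_{-\tau-T}\omega,f,g,u_\tau)$ solves the same problem on $(\tau,\infty)\times Q$ as $u(s,\tau,\theta_{-\tau}\omega,f,g,u_\tau)$ with the same initial datum, so the two coincide; evaluating at $s=t+\tau$ and recalling \eqref{rdephi} gives $\Phi(t,\tau+T,\omega,\cdot)=\Phi(t,\tau,\omega,\cdot)$.

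For the absorbing set $K$, I would verify $R(\tau+T,\omega)=R(\tau,\omega)$ directly from the explicit formula in Lemma~\ref{est7}. Performing the substitution $s\mapsto s-T$ in the defining integral, the exponential weight $e^{(\lambda-\beta)(s-\tau)}$ is invariant, the $L^\infty(Q)$-norms of $h$ and $g$ are unchanged by the $T$-periodicity in \eqref{perso1}, and a short computation using $z(s,\omega)=e^{-\alpha\omega(s)}$, $\omega(0)=0$ and $(\theta_r\omega)(s)=\omega(s+r)-\omega(r)$ shows that the prefactor $z^{-1}(\tau+T,\theta_{-\tau-T}\omega)$ together with $z(s+T,\theta_{-\tau-T}\omega)$ reproduces exactly $z^{-1}(\tau,\theta_{-\tau}\omega)\,z(s,\theta_{-\tau}\omega)$; hence $R(\tau+T,\omega)=R(\tau,\omega)$ and $K$ is $T$-periodic. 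The periodicity of $\cala$ then follows from Proposition~\ref{att}. I expect the only genuinely delicate point to be the verification of the $T$-periodicity of $\Phi$, precisely because the noise enters through $z(\cdot,\omega)$ and one must argue that only the increments of the driving path matter; all the remaining steps are bookkeeping with the formulas already derived.
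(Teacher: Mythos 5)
Your proposal is correct and takes essentially the same route as the paper: the paper's proof of Theorem \ref{thmrde1} is precisely an application of Proposition \ref{att} via Lemmas \ref{est7} and \ref{est8}, with the $T$-periodicity of $\Phi$ and $K$ under \eqref{perso1} merely asserted rather than verified. Your explicit checks of those two assertions are sound --- the shifted paths $s\mapsto(\theta_{-\tau-T}\omega)(s+T)$ and $s\mapsto(\theta_{-\tau}\omega)(s)$ differ by the constant $\omega(-\tau)-\omega(-\tau-T)$, which together with \eqref{perso1} and uniqueness of solutions gives $\Phi(t,\tau+T,\omega,\cdot)=\Phi(t,\tau,\omega,\cdot)$, and the identity $z^{-1}(\tau,\theta_{-\tau}\omega)\,z(s,\theta_{-\tau}\omega)=e^{-\alpha\omega(s-\tau)}$ reduces $R(\tau+T,\omega)=R(\tau,\omega)$ to the periodicity of $g$ and $h$ --- so you have simply filled in details the paper leaves to the reader.
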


\begin{proof}
By Lemma \ref{est8} we know that
  $\Phi$ is   $\cald$-pullback 
asymptotically  compact in $\cqzero$.
Since $\Phi$ also has 
 a closed measurable $\cald$-pullback  absorbing set
 $K$,   it follows    from Proposition \ref{att}
 that  $\Phi$   has a unique
 $\cald$-pullback attractor 
 $\cala$ in $\cqzero$
 with the given structure.
 On the other hand, if \eqref{perso1} is fulfilled,
 then the cocycle $\Phi$ and the absorbing set $K$
 are both $T$-periodic, and  so is the attractor $\cala$.
 
\end{proof}

\section{Maximal and Minimal     Random  Complete    Solutions}
\setcounter{equation}{0}

In this section, we first  discuss
the  existence of tempered
complete quasi-solutions  
of problem \eqref{rde1}-\eqref{rde3}
 which are maximal and minimal
with respect to the random attractor
$\cala$. We then consider the existence of
 tempered random periodic  solutions.
  The stability of  these
  solutions is  also  examined.

 \begin{thm}
\label{max1}
Suppose   \eqref{f1}-\eqref{gh2}   hold
and $g \in L^\infty_{loc} (\R, L^\infty(Q))$. 
Then problem \eqref{rde1}-\eqref{rde3}  has
two tempered complete quasi-solutions $u^*$ and $u_*$ 
in $\cqzero$ such that  
 for all  $ \tau \in \R$ and $\omega \in \Omega$, 
  $u^* (\tau , \omega  )  \in  \cala (\tau, \omega)$,
    $u_*(\tau, \omega) \in  \cala (\tau, \omega)$
    and
\be
\label{max1_1}
  u_*(\tau, \omega )  (x)  \le u(x) \le u^*(\tau, \omega) (x)  \quad  \mbox{for   all } \
  u \in  \cala(\tau, \omega) \ \mbox{ and } \ 
x \in \rn,
\ee
where $ \cala$
is   the unique pullback attractor of
problem \eqref{rde1}-\eqref{rde3} in $\cqzero$.
The maximal complete quasi-solution
$u^*$ is asymptotically stable from above  in the sense that
for every  $ D=\{D(\tau, \omega) : \tau \in \R, \omega \in \Omega\}$ 
$ \in \cald$ 
and $\psi (\tau, \omega) \in D(\tau, \omega)$
with $\psi(\tau, \omega ) \ge u^*(\tau, \omega)$
for all $ \tau \in \R$ and $\omega \in \Omega$, 
the cocycle $\Phi$ associated with  
 problem \eqref{rde1}-\eqref{rde3} satisfies,
 for each  $ \tau \in \R$ and $\omega \in \Omega$, 
 \be
\label{max1_2}
\lim_{t \to \infty}
\Phi (t, \tau -t ,  \theta_{ -t} \omega, \psi (\tau -t,  \theta_{ -t} \omega ))
=  u^*(\tau, \omega), 
\quad \mbox{in } \ \cqzero.
\ee
The minimal  complete  quasi-solution
 $u_*$ is asymptotically stable from below  in the sense that
for every  $ D=\{D(\tau, \omega) : \tau \in \R, \omega \in \Omega\}$ 
$ \in \cald$ 
and $\psi (\tau, \omega) \in D(\tau, \omega)$
with $\psi(\tau, \omega ) \le  u_*(\tau, \omega)$
for all 
$ \tau \in \R$ and $\omega \in \Omega$, 
the cocycle $\Phi$ associated with  
 problem \eqref{rde1}-\eqref{rde3} satisfies,
 for each  $ \tau \in \R$ and $\omega \in \Omega$, 
 \be
\label{max1_3}
\lim_{t \to \infty}
\Phi (t, \tau -t ,  \theta_{ -t} \omega, \psi (\tau -t,  \theta_{ -t} \omega ))
=  u_*(\tau, \omega), 
\quad \mbox{in } \ \cqzero.
\ee
  \end{thm}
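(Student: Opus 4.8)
The plan is to build the extremal complete quasi-solutions $u^*$ and $u_*$ as pullback limits of solutions started from the extremal (in the ordering sense) data, exploiting the comparison principle together with the attraction properties already established. First I would construct $u^*$ by choosing, for each $(\tau,\omega)$, an initial datum that dominates the whole absorbing set $K$ from above and is itself tempered. Concretely, I would take the solution of the \emph{linear} comparison problem \eqref{lv1}-\eqref{lv3}: by Lemma \ref{lcoms2} its unique tempered complete quasi-solution $\xi$ pullback attracts all solutions, and by \eqref{lgsol_p3} (the comparison estimate) $\xi$ dominates every nonlinear solution pointwise. Thus $\xi$ bounds the attractor $\cala$ from above, and any solution started above $\cala$ stays sandwiched. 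The candidate for $u^*$ is then the pullback limit
\be\label{ustardef}
u^*(\tau,\omega)=\lim_{t\to\infty}\Phi(t,\tau-t,\theta_{-t}\omega,\psi_0(\tau-t,\theta_{-t}\omega)),
\ee
where $\psi_0$ is any tempered datum lying above $\cala$; the minimal solution $u_*$ is obtained symmetrically as a pullback limit from below.

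\textbf{Key steps.} (1) Verify the limit \eqref{ustardef} exists in $\cqzero$. Here I would use the monotonicity of $\Phi$ (consequence of the comparison principle): starting from data above $\cala$, the pullback trajectories form a decreasing net trapped between $\cala$ and the tempered upper barrier $\xi$, and asymptotic compactness (Lemma \ref{est8}) forces convergence along the full sequence, not just a subsequence. (2) Show the limit is independent of the chosen dominating datum $\psi_0$ — this is what makes $u^*$ well-defined and is the content of the stability assertion \eqref{max1_2}. The argument is that any two admissible data are squeezed together by the contraction rate $e^{-(\lambda-\beta)t}$ inherited from the linear estimate \eqref{coms1_p42}, so their pullback limits coincide. (3) Check that $u^*$ is a complete quasi-solution: this follows by passing to the limit in the cocycle identity (Definition \ref{ds1}(iii)), using continuity of $\Phi$ in the initial datum. (4) Confirm $u^*(\tau,\omega)\in\cala(\tau,\omega)$ via the characterization of $\cala$ as the union of values of complete quasi-solutions in Theorem \ref{thmrde1}. (5) Establish the extremality \eqref{max1_1}: for any $u\in\cala(\tau,\omega)$, represent $u$ as $\psi(0,\cdot)$ of a $\cald$-complete solution, observe $\psi\le\psi_0$ after pullback, and pass to the limit using monotonicity to get $u\le u^*$; the lower bound is symmetric.

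\textbf{Main obstacle.} The subtle point is the monotonicity of the cocycle $\Phi$ and its compatibility with the random change of variables \eqref{vu}. The comparison principle applies directly to the deterministic transformed equation \eqref{v1}, but I must verify that the multiplicative factor $z(t,\omega)=e^{-\alpha\omega(t)}$ in the conjugation $u=z^{-1}v$ is strictly positive and therefore order-preserving, so that comparison for $v$ transfers to comparison for $u$. Once this is secured, monotonicity of $\Phi$ is immediate, and the delicate part becomes showing that the pullback limits exist as genuine limits (not merely subsequential) and are identified with specific points of the attractor. I expect the hardest technical step to be step (2)-(5) together: proving that the maximal solution constructed as a pullback limit from above actually bounds \emph{every} point of $\cala(\tau,\omega)$ from above simultaneously, which requires combining the order structure with the variational characterization of $\cala$ and the uniform attraction rate. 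The periodicity and the reduction of $u^*,u_*$ to random periodic solutions under \eqref{perso1} would then follow routinely from the $T$-periodicity of $\Phi$, $K$, and $\cala$ already recorded in Theorem \ref{thmrde1}.
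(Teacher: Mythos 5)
Your scaffolding matches the paper's: use the tempered complete quasi-solution $\xi$ of the linear problem \eqref{lv1}-\eqref{lv3} as an upper barrier, take the pullback limit of the nonlinear flow started from it, and identify the limit inside $\cala(\tau,\omega)$ by compactness. But your step (2) contains a genuine error. You claim the limit is independent of the dominating datum because ``any two admissible data are squeezed together by the contraction rate $e^{-(\lambda-\beta)t}$ inherited from the linear estimate \eqref{coms1_p42}.'' That estimate is a property of the \emph{linear} equation \eqref{lv1} only; the nonlinear cocycle $\Phi$ does not pullback-contract arbitrary pairs of solutions. Indeed, if it did, the attractor would be a singleton for every $\nu$, contradicting Theorem \ref{thmbi}, where for $\nu>1$ the attractor contains three distinct quasi-solutions. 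The paper's actual argument for \eqref{max1_2} is asymmetric and cannot be replaced by a contraction: the upper bound $\limsup_{t\to\infty}\Phi(t,\tau-t,\theta_{-t}\omega,\psi(\tau-t,\theta_{-t}\omega))\le u^*(\tau,\omega)$ holds for \emph{all} tempered data and is proved by a two-parameter limit (\eqref{max1_p30}--\eqref{max1_p41}): first pull back past an intermediate time $\tau-s$, use the linear attraction \eqref{lcoms2a2} together with comparison $|v|\le\vt$ and temperedness to replace the data at time $\tau-s$ by $\xi(\tau-s,\omega^{-s})$, evolve nonlinearly from $\tau-s$ to $\tau$ by continuity in initial data, and only then let $s\to\infty$; the lower bound $\liminf\ge u^*$ holds \emph{only} for data $\ge u^*$, by comparison with $u^*$ itself viewed as a quasi-solution (\eqref{max1_p50}). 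This asymmetry is exactly why the conclusion is stability ``from above'' rather than genuine uniqueness of pullback limits.

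A second, smaller gap sits in your step (1): monotonicity in $t$ of the pullback trajectory is \emph{not} a consequence of merely starting above $\cala$. In the paper it follows from the specific fact that $\xi$ is a complete quasi-solution of the dominating linear equation, so that $v(\tau-t_2,\tau-t_1,\omega^{-\tau},f,g,\xi(\tau-t_1,\omega^{-t_1}))\le\vt(\cdots)=\xi(\tau-t_2,\omega^{-t_2})$, whence applying $\Phi$ preserves the order (\eqref{max1_p5}--\eqref{max1_P10}). For a generic tempered $\psi_0$ dominating $\cala$ there is no such supersolution identity, and asymptotic compactness then yields only subsequential limits. So your construction should be pinned to $\psi_0=\xi$ (and $-\xi$ for $u_*$), with the extension to arbitrary dominating data obtained afterwards, as the stability statement, via the asymmetric argument above. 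Your step (5) has the same compression: the inequality ``$\psi\le\psi_0$ after pullback'' for attractor elements is true but itself requires the linear-attraction argument (comparison with $\vt$ plus \eqref{lcoms2a2} and temperedness of $\cala$), i.e., essentially the same two-step limit you omitted. Your observation on positivity of $z(t,\omega)=e^{-\alpha\omega(t)}$ making the conjugation order-preserving is correct and is implicitly used by the paper.
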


\begin{proof}
Let $\xi$ be the unique  tempered complete quasi-solution of
problem \eqref{lv1}-\eqref{lv3} in
$\cqzero$ as given by \eqref{lcoms2a1}.  
By
\eqref{f2} we  find  that $\xi$
and $-\xi$ are super- and sub-solutions of
problem \eqref{v1}-\eqref{v3}, respectively.
Next, we   prove   that
for every $ \tau  \in \R$
and $\omega \in \Omega$, there exist
$u^*(\tau, \omega)$ and $u_* (\tau, \omega)$ 
in $\cala (\tau, \omega)$
such that  
\be
\label{max1_p1}
\lim_{t \to  \infty}
\| u(\tau,  \tau -t, \theta_{ -\tau} \omega,
f, g, \xi (\tau -t, \theta_{ -t} \omega)) 
  - u^* (\tau, \omega) \|_\cqzero=0,
\ee
and
\be
\label{max1_p2}
 \lim_{t \to  \infty}
\| u(\tau,  \tau -t, \theta_{ -\tau} \omega,
f, g,  - \xi (\tau -t, \theta_{ -t} \omega)) 
  - u_* (\tau, \omega) \|_\cqzero=0,
\ee
where $u(\tau,  \tau -t, \theta_{ -\tau} \omega,
f, g,  \pm \xi (\tau -t, \theta_{ -t} \omega)) $
is the solution of problem \eqref{rde1}-\eqref{rde3}
with initial data $ \pm \xi (\tau -t, \theta_{ -t} \omega) $
 at initial time $\tau -t$.
 We will further  prove that
$u^*$ and $u_*$ are both  tempered complete 
quasi-solutions
and have the desired properties as stated in Theorem \ref{max1}.

By  \eqref{vuid}  we find that, 
for every $ \tau  \in \R$,
 $\omega \in \Omega$
 and $t \ge 0$, 
 \be
\label{max1_p3}
 u(\tau,  \tau -t, \theta_{ -\tau} \omega,
f, g, \xi (\tau -t, \theta_{ -t} \omega) )
=
 v(\tau,  \tau -t,  \omega^{-\tau},
f, g,  z(-t, \omega) \xi (\tau -t, \theta_{ -t} \omega) ).
\ee
By \eqref{lcoms2a1} we have
$$
 z(-t, \omega) \xi (\tau -t, \theta_{ -t} \omega)
 = \xi (\tau -t,   \omega^{-t}  ),
 $$
 which along with \eqref{max1_p3}  implies
 \be
\label{max1_p4}
 u(\tau,  \tau -t, \theta_{ -\tau} \omega,
f, g, \xi (\tau -t, \theta_{ -t} \omega) )
=
 v(\tau,  \tau -t,  \omega^{-\tau},
f, g,   \xi (\tau -t,   \omega^{-t}  )    ).
\ee
Let $t_1> t_2 \ge 0$. By the comparison principle we have
\be\label{max1_p5}
v(\tau -t_2,  \tau -t_1,  \omega^{-\tau},
f, g,   \xi (\tau -t_1,   \omega^{-t_1}  )    )
\le
\vt (\tau -t_2,  \tau -t_1,  \omega^{-\tau},
 g,  h,   \xi (\tau -t_1,   \omega^{-t_1}  )    ).
 \ee
 Since $\xi$ is a complete quasi-solution of 
 problem \eqref{lv1}-\eqref{lv3}, we get
 $$
 \vt (\tau -t_2,  \tau -t_1,  \omega^{-\tau},
 g,  h,   \xi (\tau -t_1,   \omega^{-t_1}  )    )
 =\xi (\tau -t_2, \omega^{-t_2}),
 $$
 which together with \eqref{max1_p5} shows that
 \be\label{max1_p6}
v(\tau -t_2,  \tau -t_1,  \omega^{-\tau},
f, g,   \xi (\tau -t_1,   \omega^{-t_1}  )    )
\le
\xi (\tau -t_2, \omega^{-t_2}).
 \ee
 By \eqref{max1_p6} and the 
 comparison principle, we obtain
$$
v(\tau ,  \tau -t_2,  \omega^{-\tau},
f, g,   \ 
v(\tau -t_2,  \tau -t_1,  \omega^{-\tau},
f, g,   \xi (\tau -t_1,   \omega^{-t_1}   )) )
$$
 $$
\le v(\tau ,  \tau -t_2,  \omega^{-\tau},
f, g,   \ 
\xi (\tau -t_2, \omega^{-t_2} )),
$$
which implies that  for all $t_1> t_2 \ge 0$, 
 \be\label{max1_P10}
v(\tau ,    \tau -t_1,  \omega^{-\tau},
f, g,   \xi (\tau -t_1,   \omega^{-t_1}   ) )
\le
v(\tau ,  \tau -t_2,  \omega^{-\tau},
f, g,   \ 
\xi (\tau -t_2, \omega^{-t_2} )).
\ee
Therefore, for each $\tau \in \R$  
and $\omega \in \Omega$,
$v(\tau ,    \tau -t,  \omega^{-\tau},
f, g,   \xi (\tau -t,   \omega^{-t}   ) )$
is monotone in $t \in \R^+$, and so is
$u (\tau ,    \tau -t,  \theta_{ -\tau} \omega,
f, g,   \xi (\tau -t,   \theta_{ -t} \omega  ) )$
by \eqref{max1_p4}.
Since $\xi$ is tempered,  
by the attracting property
of $\cala$, 
for each $\tau \in \R$   and
$\omega \in \Omega$,
\be\label{max1_p11}
\lim_{t \to \infty}
{\rm dist} _\cqzero
\left (
u (\tau ,    \tau -t,  \theta_{ -\tau} \omega,
f, g,   \xi (\tau -t,   \theta_{ -t} \omega  ) ),
  \ \cala (\tau, \omega)   \right )
=0.
\ee
By the compactness of  
$\cala (\tau, \omega)$ in 
$\cqzero$,  we find that
for  each $t \ge 0$, there is $u_0 (\tau, \omega, t)
 \in \cala (\tau, \omega)$   
such that
$$
\|u (\tau ,    \tau -t,  \theta_{ -\tau} \omega,
f, g,   \xi (\tau -t,   \theta_{ -t} \omega  ) )
- u_0 (\tau, \omega, t)
\|_\cqzero
$$
\be
\label{max1_p12}
=
{\rm dist} _\cqzero
\left (
u (\tau ,    \tau -t,  \theta_{ -\tau} \omega,
f, g,   \xi (\tau -t,   \theta_{ -t} \omega  ) ),
  \ \cala (\tau, \omega)   \right ).
\ee
By \eqref{max1_p11}-\eqref{max1_p12} we find
\be
\label{max1_p13}
\lim_{t \to \infty}
\|u (\tau ,    \tau -t,  \theta_{ -\tau} \omega,
f, g,   \xi (\tau -t,   \theta_{ -t} \omega  ) )
- u_0 (\tau, \omega, t)
\|_\cqzero = 0.
\ee
Since 
$u_0 (\tau, \omega, t) \in \cala (\tau, \omega)$
for all $t \in \R^+$ and 
$\cala (\tau, \omega)$
is compact in $\cqzero$, we  see that there 
exist $u^*(\tau, \omega) \in
\cala (\tau, \omega)$
and a sequence $\{t_n\}_{n=1}^\infty$ with
$t_n \to \infty$ such that
$u_0(\tau, \omega, t_n) \to u^*(\tau, \omega)$
in $\cqzero$. 
This and \eqref{max1_p13} imply
\be
\label{max1_p14}
\lim_{n \to \infty}
\|u (\tau ,    \tau -t_n,  \theta_{ -\tau} \omega,
f, g,   \xi (\tau -t_n,   \theta_{ -t_n} \omega  ) )
- u^* (\tau, \omega)
\|_\cqzero = 0.
\ee
Since
$u (\tau ,    \tau -t,  \theta_{ -\tau} \omega,
f, g,   \xi (\tau -t,   \theta_{ -t} \omega  ) )$
is monotone in $t \in \R^+$, we find from
\eqref{max1_p14}
that, for each $\tau \in \R$  and
$\omega \in \Omega$,
\be
\label{max1_p15}
\lim_{t \to \infty}
\|u (\tau ,    \tau -t,  \theta_{ -\tau} \omega,
f, g,   \xi (\tau -t,   \theta_{ -t} \omega  ) )
- u^* (\tau, \omega)
\|_\cqzero = 0.
\ee
Thus  \eqref{max1_p1} follows. 
Given $r \in \R$, replacing $\tau$ by $r +\tau$
and $\omega$ by $\theta_{ r} \omega$
in \eqref{max1_p15} we obtain
\be
\label{max1_p16}
\lim_{t \to \infty}
\|u (r+ \tau ,    r+ \tau -t,  \theta_{ -\tau} \omega,
f, g,   \xi (r+ \tau -t,   \theta_{ r-t} \omega  ) )
- u^* (r+ \tau, \theta_{ r} \omega)
\|_\cqzero = 0.
\ee

We  now prove that
$u^*$ is a complete quasi-solution of problem
\eqref{rde1}-\eqref{rde3}.
By \eqref{max1_p15} and the continuity
of solutions in initial data in $\cqzero$, we obtain that
for every $r \in \R^+$, 
$\tau \in \R$ and $\omega \in \Omega$,
$$
u(r + \tau, \tau, \theta_{ -\tau}, f, g, u^*(\tau, \omega))
$$
$$
=
\lim_{t \to \infty}
u(r + \tau, \tau, \theta_{ -\tau}, f, g,
 u (\tau ,    \tau -t,  \theta_{ -\tau} \omega,
f, g,   \xi (\tau -t,   \theta_{ -t} \omega  ) ))
$$
$$
=
\lim_{t \to \infty}
u(r + \tau,  \tau -t,  \theta_{ -\tau} \omega,
f, g,   \xi (\tau -t,   \theta_{ -t} \omega  ) )
$$
$$
=
\lim_{s \to \infty}
u(r + \tau,  r+ \tau -s,  \theta_{ -\tau} \omega,
f, g,   \xi (r+\tau -s,   \theta_{ -s } \theta_{ r }  \omega  ) )
\quad ( {\rm where } \ s  = r+t)
$$
\be\label{max1_p17}
=
\lim_{t \to \infty}
u(r + \tau,  r+ \tau -t,  \theta_{ -\tau} \omega,
f, g,   \xi (r+\tau -t,   \theta_{ -t } \theta_{ r }  \omega  ) ).
\ee
It follows from
\eqref{max1_p16}-\eqref{max1_p17} that
for every  $r \in \R^+$, 
$\tau \in \R$
and $\omega \in \Omega$,
 $$ u (r+ \tau ,    \tau  ,  \theta_{ -\tau} \omega,
f, g,   u^* (\tau,  \omega) )
= u^*(r+\tau,  \theta_{ r}  \omega ),
$$
which shows that for every  $r \in \R^+$, 
$ \tau \in \R$
and $\omega \in \Omega$,
\be\label{max1_p20}
\Phi (r, \tau, \omega, u^* (\tau,  \omega) )
= u^*(r+\tau,  \theta_{ r}  \omega ).
\ee
By definition we see that $u^*$ is a complete
quasi-solution of $\Phi$. Since
$\cala$ is tempered   and 
$u^* (\tau,  \omega) \in \cala (\tau, \omega)$
for all $\tau \in \R$   and
$\omega \in \Omega$, we know   that
$u^*$ is also tempered.

 By a similar argument, one can show that there
 exists   a complete  quasi-solution  
 $u_*$ such that for every  $\tau \in \R$
 and $\omega \in \Omega$,    
$u_*(\tau, \omega) \in  \cala(\tau, \omega)$  and
\eqref{max1_p2} is fulfilled.

We now prove \eqref{max1_1}.
Let $r, s \in \R^+$,  $\tau \in \R$
and  $\omega \in \Omega$.
Replacing $t$ by $r$,
 $\tau$ by $\tau -s$ and
$\omega$ by $\omega^{-s}$ in \eqref{lcoms2a2}, we get
$$
\|\vt (\tau -s, \tau-s-r, \omega^{-\tau}, g, h,
\vt_{\tau -s-r} )  - \xi (\tau -s, \omega^{-s})
\|_\cqzero 
$$
$$
\le c e^{-(\lambda -\beta) r} \left (
\| \vt_{\tau -s-r} \|_\cqzero
+ \| \xi (\tau -s -r,  \omega^{-s-r} )\|_\cqzero
\right ).
$$
Letting  $r = t - s$ with $t \ge s$ in the above, we obtain
$$
\|\vt (\tau -s, \tau-t, \omega^{-\tau}, g, h,
\vt_{\tau - t} )  - \xi (\tau -s, \omega^{-s})
\|_\cqzero 
$$
\be\label{max1_p30}
\le c e^{-(\lambda -\beta)  (t-s) } \left (
\| \vt_{\tau -t} \|_\cqzero
+ \| \xi (\tau - t,  \omega^{-t} )\|_\cqzero
\right ).
\ee
Suppose $D\in \cald$ and
$u_{\tau -t} \in D(\tau -t, \theta_{ -t} \omega)$.
Letting  $\vt_{\tau -t} = z(-t, \omega) |u_{\tau -t} |$
in \eqref{max1_p30}, we get
$$
\|\vt (\tau -s, \tau-t, \omega^{-\tau}, g, h,
 z(-t, \omega) |u_{\tau -t} | )  - \xi (\tau -s, \omega^{-s})
\|_\cqzero 
$$
\be\label{max1_p31}
\le c e^{-(\lambda -\beta) (t-s) } \left (
\|  z(-t, \omega) u_{\tau -t}   \|_\cqzero
+ \| \xi (\tau - t,  \omega^{-t} )\|_\cqzero
\right ).
\ee
Taking the limit of \eqref{max1_p31} as $t \to \infty$, we obtain,
for every $s \in \R^+$, $\tau \in \R$  and $\omega \in \Omega$,
\be\label{max1_p32}
\lim_{t \to \infty}
\|\vt (\tau -s, \tau-t, \omega^{-\tau}, g, h,
 z(-t, \omega) |u_{\tau -t} | )  - \xi (\tau -s, \omega^{-s})
\|_\cqzero  =0.
\ee
By \eqref{max1_p32} and  the 
continuity of solutions, we find that
\be\label{max1_p33}
\lim_{t \to \infty}
v(\tau, \tau -s, \omega^{-\tau}, f, g,
\vt (\tau -s, \tau-t, \omega^{-\tau}, g, h,
 z(-t, \omega) |u_{\tau -t} | ) )
 = v(\tau, \tau -s, \omega^{-\tau}, f, g, \xi (\tau -s, \omega^{-s} ))
\ee
in $\cqzero$. 
By  the  comparison  principle,  we have
$$
  v  (\tau -s, \tau-t, \omega^{-\tau}, f,  g, 
 z(-t, \omega) |u_{\tau -t} | )
 \le \vt (\tau -s, \tau-t, \omega^{-\tau}, g, h,
 z(-t, \omega) |u_{\tau -t} | ),
 $$
 which along with the comparison principle again implies
 $$
 v(\tau, \tau -s, \omega^{-\tau}, f, g,
v (\tau -s, \tau-t, \omega^{-\tau},  f, g, 
 z(-t, \omega) u_{\tau -t}   ))
 $$
 $$
 \le  v(\tau, \tau -s, \omega^{-\tau}, f, g, 
 \vt (\tau -s, \tau-t, \omega^{-\tau}, g, h,
 z(-t, \omega) |u_{\tau -t} |  )).
 $$
 In other words, we have
 \be\label{max1_p34}
 v (\tau , \tau-t, \omega^{-\tau},  f, g, 
 z(-t, \omega) u_{\tau -t}   )
 \le 
  v(\tau, \tau -s, \omega^{-\tau}, f, g, 
 \vt (\tau -s, \tau-t, \omega^{-\tau}, g, h,
 z(-t, \omega) |u_{\tau -t} |  )).
\ee
Letting $t \to \infty$ in \eqref{max1_p34}, by
\eqref{max1_p33} we get
\be\label{max1_p35}
\limsup_{t \to \infty}
 v (\tau , \tau-t, \omega^{-\tau},  f, g, 
 z(-t, \omega) u_{\tau -t}   )
 \le
  v(\tau, \tau -s, \omega^{-\tau}, f, g, 
 \xi(\tau -s, \omega^{-s} ) ).
 \ee
It  follows from 
   \eqref{vuid},  \eqref{max1_p4}  and \eqref{max1_p35}
   that  for all $ s \in \R^+$, 
\be\label{max1_p40}
\limsup_{t \to \infty}
 u  (\tau , \tau-t, \theta_{ -\tau}  \omega,  f, g, 
   u_{\tau -t}   )
 \le
  u (\tau, \tau -s, \theta_{ -\tau} \omega , f, g, 
 \xi(\tau -s, \theta_{ -s}  \omega  ) ).
 \ee
 Letting $s \to \infty$ in \eqref{max1_p40}, 
 by \eqref{max1_p15} we get,   for 
 every $\tau \in \R  $  and $\omega \in \Omega$,
 \be\label{max1_p41}
\limsup_{t \to \infty}
 u  (\tau , \tau-t, \theta_{ -\tau}  \omega,  f, g, 
   u_{\tau -t}   )
 \le
  u^* (\tau, \omega), 
  \quad \mbox{uniformly on } \ {\overline {Q}}.
 \ee
   Given $ \tau \in \R$,
    $\omega \in \Omega$  and $u\in  \cala (\tau, \omega)$, by
 the invariance of  $\cala$,  we find
 that, for every $t > 0$, there is $u_{\tau -t}
 \in \cala (\tau -t, \theta_{ -t} \omega )$
 such that
 $u = 
 u  (\tau , \tau-t, \theta_{ -\tau}  \omega,  f, g, 
   u_{\tau -t}   )$. Therefore, 
   by \eqref{max1_p41} we find 
\be
\label{max1_p42}
 u(x) \le u^*(\tau, \omega) (x) \quad
 \mbox{for all } \ x \in Q.
 \ee
 By an analogous argument, one can check that
$u(x) \ge  u_*(\tau, \omega) (x)$    for all  $  x \in Q$,
and thus \eqref{max1_1} follows.

We now consider the stability of $u^*$  and $u_*$.
    Suppose $ D  \in \cald$ 
and $\psi (\tau, \omega ) \in D(\tau, \omega)$
with $ \psi (\tau, \omega ) \ge  u^* (\tau, \omega )$
for all $\tau \in \R$  and $\omega \in \Omega$.
   By the comparison principle we get,
   for  every $t \in \R^+$, $\tau \in \R$
   and $\omega \in \Omega$, 
$$
u(\tau, \tau -t, \theta_{ -\tau} \omega, f, g, 
\psi (\tau -t,  \theta_{ -t} \omega ))
\ge
u(\tau, \tau -t, \theta_{ -\tau} \omega, f, g, 
u^* (\tau -t,  \theta_{ -t} \omega ))
\ge u^*(\tau, \omega),
$$ 
which implies
\be
\label{max1_p50}
\liminf_{t \to \infty}
\Phi (t, \tau -t ,  \theta_{ -t} \omega, \psi (\tau -t,  \theta_{ -t} \omega ))
\ge u^*(\tau, \omega).
\ee
By \eqref{max1_p41} and \eqref{max1_p50} we find that
$$
\lim_{t \to \infty}
\Phi (t, \tau -t ,  \theta_{ -t} \omega, \psi (\tau -t,  \theta_{ -t} \omega ))
=  u^*(\tau, \omega), 
\quad \mbox{in } \ \cqzero,
$$
which gives \eqref{max1_2}.
The convergence of \eqref{max1_3}
can be proved similarly and the details are omitted.
  \end{proof}

  For random periodic solutions,  we have the following result.

   \begin{thm}
      \label{max2}
Let   \eqref{f1}-\eqref{f2}  hold.
Suppose  there exists $T>0$ 
such that  \eqref{perso1} is     valid. 
 Then the stochastic problem \eqref{rde1}-\eqref{rde3}
has two tempered random periodic solutions
$u^*$ and $u_*$ which satisfy \eqref{max1_1}-\eqref{max1_3}.
\end{thm}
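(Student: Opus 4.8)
The plan is to obtain everything except periodicity directly from Theorem~\ref{max1}, and then to isolate the one genuinely new point, namely that the complete quasi-solutions $u^*$ and $u_*$ constructed there are $T$-periodic in $\tau$. First I would check that the hypotheses of Theorem~\ref{max1} are in force under \eqref{f1}--\eqref{f2} and \eqref{perso1}. Since $g$ and $h$ are $T$-periodic and lie in $L^1_{loc}(\R, L^\infty(Q))$, they are integrable over one period, so a geometric-series estimate shows that $\int_{-\infty}^\tau e^{\delta s}(\|g(s,\cdot)\|_{L^\infty(Q)}+\|h(s,\cdot)\|_{L^\infty(Q)})\,ds$ converges, giving \eqref{gh1}; moreover $\int_{-\infty}^0 e^{\delta s}(\|g(s+r,\cdot)\|_{L^\infty(Q)}+\|h(s+r,\cdot)\|_{L^\infty(Q)})\,ds$ stays bounded uniformly in $r$ by periodicity, and multiplying by $e^{cr}\to 0$ yields \eqref{gh2} (the local boundedness of $g$ needed for asymptotic compactness likewise reduces to its behaviour on a single period). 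Theorem~\ref{max1} then produces tempered complete quasi-solutions $u^*$ and $u_*$ in $\cqzero$, and the bounds \eqref{max1_1} together with the stability statements \eqref{max1_2}--\eqref{max1_3} follow at once. It remains to show that $u^*$ and $u_*$ satisfy $u^*(\tau+T,\omega)=u^*(\tau,\omega)$ and $u_*(\tau+T,\omega)=u_*(\tau,\omega)$, so that by Definition~\ref{comporbit}(iii) they are random periodic solutions.

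For this I would assemble three periodicity facts. By Lemma~\ref{lcoms2}, the linear quasi-solution $\xi$ used to build $u^*$ and $u_*$ is $T$-periodic in its time argument, $\xi(s+T,\omega)=\xi(s,\omega)$. By the first equality in \eqref{vuid}, $u(\tau,\tau-t,\theta_{-\tau}\omega,f,g,w)=u(0,-t,\omega,f^\tau,g^\tau,w)$, while \eqref{perso1} gives $f^{\tau+T}=f^\tau$ and $g^{\tau+T}=g^\tau$; combining these yields the solution-operator identity $u(\tau+T,\tau+T-t,\theta_{-(\tau+T)}\omega,f,g,w)=u(\tau,\tau-t,\theta_{-\tau}\omega,f,g,w)$ for every admissible initial datum $w$. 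Finally I would use the explicit limit characterization \eqref{max1_p15}, which presents $u^*(\tau,\omega)$ as the $t\to\infty$ limit of $u(\tau,\tau-t,\theta_{-\tau}\omega,f,g,\xi(\tau-t,\theta_{-t}\omega))$.

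Putting these together, I replace $\tau$ by $\tau+T$ in \eqref{max1_p15}; the initial datum becomes $\xi(\tau+T-t,\theta_{-t}\omega)=\xi(\tau-t,\theta_{-t}\omega)$ by periodicity of $\xi$, and the solution-operator identity rewrites the evolution as $u(\tau,\tau-t,\theta_{-\tau}\omega,f,g,\,\cdot\,)$, so each term defining $u^*(\tau+T,\omega)$ equals the corresponding term defining $u^*(\tau,\omega)$, whence $u^*(\tau+T,\omega)=u^*(\tau,\omega)$; the argument for $u_*$ is identical with $-\xi$ in place of $\xi$. I expect the main point to watch to be not an estimate but a bookkeeping subtlety: the required periodicity is in $\tau$ with the sample path $\omega$ held \emph{fixed} (the Wiener path is of course not periodic), so one must confirm that the time-shift incurred in passing from $\tau$ to $\tau+T$ is absorbed entirely by the $T$-periodicity of $f^\tau,g^\tau$ in $\tau$ and of $\xi$ in its time argument, leaving the arguments $\theta_{-\tau}\omega$ and $\theta_{-t}\omega$ untouched on both sides. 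Once this is verified, $u^*$ and $u_*$ are $T$-periodic complete quasi-solutions, hence random periodic solutions, and they inherit \eqref{max1_1}--\eqref{max1_3} verbatim from Theorem~\ref{max1}.
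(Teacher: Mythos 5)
Your proposal is correct and follows essentially the same route as the paper: invoke Theorem \ref{max1}, note via Lemma \ref{lcoms2} and \eqref{perso1} that the linear quasi-solution $\xi$ is $T$-periodic, and then replace $\tau$ by $\tau+T$ in the limit characterization \eqref{max1_p15}, using the identity from \eqref{vuid} together with $f^{\tau+T}=f^\tau$, $g^{\tau+T}=g^\tau$ to conclude $u^*(\tau+T,\omega)=u^*(\tau,\omega)$ (and similarly for $u_*$). Your preliminary check that periodicity yields \eqref{gh1}--\eqref{gh2} is a sensible addition the paper leaves implicit, and your closing remark about the sample path $\omega$ staying fixed correctly identifies the only bookkeeping subtlety.
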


\begin{proof} 
Let $u^*$ and $u_*$ be the maximal and minimal complete quasi-solutions
of problem \eqref{rde1}-\eqref{rde3} obtained in Theorem \ref{max1},
respectively. 
By \eqref{perso1} and Lemma \ref{lcoms2} we know that
the unique complete quasi-solution $\xi$ of problem \eqref{lv1}-\eqref{lv3}
is   periodic with period $T$.
Then it follows from  \eqref{max1_p15}
 that for each $\tau \in \R$  and $\omega \in \Omega$,
 $$
  u^* (\tau +T, \omega)=
\lim_{t \to \infty}
u (\tau+T ,    \tau+T -t,  \theta_{ -\tau -T} \omega,
f, g,   \xi (\tau+T -t,   \theta_{ -t} \omega  ) )
 $$
 $$
 =
 \lim_{t \to \infty}
u (\tau ,    \tau -t,  \theta_{ -\tau } \omega,
f, g,   \xi (\tau -t,   \theta_{ -t} \omega  ) )
=u^*(\tau, \omega).
 $$
 This shows   that $u^*$ is $T$-periodic.
 The T-periodicity of $u_*$ can be justified by a 
 similar argument, and the details are omitted.
\end{proof}

 By Theorem  \ref{max2} we find that
   the random attractor of problem
 \eqref{rde1}-\eqref{rde3} is  
 either trivial or it
 contains at least two different   random periodic solutions.
 Based on this  observation,  we can  
 prove the existence of multiple random periodic
 solutions  when the attractor
 of  the equation  is nontrivial. 
 This idea is demonstrated by the 
 Chafee-Infante  equation
 presented in the next section.

\section {Bifurcation of Random  Complete and Periodic Solutions}
\setcounter{equation}{0}

In this section, 
we apply the results of the previous section
to a specific model called  the Chafee-Infante  equation,
and prove the multiplicity of random complete and  periodic solutions.
As we will see later,   these solutions  undergo  a pitchfork
bifurcation when a parameter varies. 
 
The one-dimensional   autonomous  Chafee-Infante  equation 
is defined in $Q = (0, \pi)$:
\be
\label{cf1}
{\frac {\partial u}{\partial t}} -{\frac {\partial ^2 u}{\partial x^2}}
=\nu u -  \gamma_0 u^3, \quad x \in (0, \pi), \quad t>0,
\ee
with   boundary condition
\be
\label{cf2}
u(t, 0) = u(t, \pi) =0, \quad  t>0,\ee
and   initial condition
\be
\label{cf3}
u(0, x) =u_0(x), \quad x \in (0, \pi),\ee
where $\nu$  and $\gamma_0$
 are    positive constants.
 
The dynamics of problem \eqref{cf1}-\eqref{cf3}
is well understood   in the literature, see, e.g.,
\cite{cha1, hen1}. 
Let $A_0  = -\partial_{xx}$ with 
boundary condition \eqref{cf2}.
Then  the eigenvalues of $A_0$ are  
$ \lambda_n = n^2$ where $n$ is any positive  integer.
For every $n \in \N$, $A_0$ has an eigenvector
$e_n = \sin n x$ corresponding to $\lambda_n$.
It is known that $\{e_n\}_{n=1}^\infty$ is an orthogonal
basis of $\ltwo$. 
   For every $\nu \in (n^2, (n+1)^2)$
 with $n \in \N$,  it was proved by  Chafee and Infante in \cite{cha1} 
that  problem \eqref{cf1}-\eqref{cf3}
has exactly $2n+1$ equilibrium solutions.
Moreover,   using   a Liapunov function, one can show 
that problem \eqref{cf1}-\eqref{cf3}
has an $n$-dimensional global attractor 
in $\cqzero$ which is given by 
 the union of unstable manifolds
of the $(2n+1)$
 equilibrium solutions  for 
  $\nu \in (n^2, (n+1)^2)$
  (see, \cite{hen1}). 

Suppose  $\gamma: \R \to \R$ is  a bounded continuous function
and 
there exists  a positive number $\gamma_0$
such  that
\be
\label{ga1}
\gamma (t) \ge \gamma_0, \quad \mbox{  for  all  } \
t \in \R.
\ee 
 We now consider the stochastically perturbed 
 Chafee-Infante  equation,
 for  each  $\tau \in \R$,
 \be
\label{scf1}
{\frac {\partial u}{\partial t}} -{\frac {\partial ^2 u}{\partial x^2}}
=\nu u -  \gamma (t) u^3  
+  \alpha u \circ  {\frac { d\omega}{dt}},  \quad x \in (0, \pi), \quad t>\tau,
\ee
with   boundary condition
\be
\label{scf2}
u(t, 0) = u(t, \pi) =0, \quad  t> \tau,
\ee
and   initial condition
\be
\label{scf3}
u(\tau, x) =u_\tau (x), \quad x \in (0, \pi),
\ee
where $\alpha$ is a positive number,   and 
$\omega$  is 
the   two-sided real-valued Wiener 
process  described before.

The bifurcation and the structures of attractors 
of equation  \eqref{scf1}
have been investigated in \cite{carva1, carva2, lan1}
for $\alpha =0$ and in \cite{car8}
for constant $\gamma$.  We here  consider
the bifurcation of random complete  solutions
of problem \eqref{scf1}-\eqref{scf3}
when $\gamma$  depends on $t$. 
 As a by-product, we obtain the bifurcation of random periodic
 solutions  for periodic  $\gamma$.
Note   that $u=0$ is a solution
of problem \eqref{scf1}-\eqref{scf3}, 
and hence  the origin is a trivial
random complete  solution for every $\nu \in \R^+$.
Let $f(t,x, s) = \nu s -\gamma (t) s^3$
for all $t \in \R$, $x \in (0,\pi)$
and $s \in \R$.
Recall that $\lambda_1 =1$ is the first eigenvalue of
$A_0$ with \eqref{cf2}. By \eqref{ga1}, one can check   that
conditions \eqref{f1}-\eqref{f2}
are fulfilled  for any $\beta \in (0, \lambda_1)$ with an appropriate
positive constant function $h$.
Then it follows   from
Theorem  \ref{thmrde1} 
that the stochastic problem
\eqref{scf1}-\eqref{scf3} has a unique tempered  
pullback attractor
$\cala =\{ \cala (\tau, \omega): \tau \in \R,
\omega \in \Omega \}$.
It is evident   that
$0 \in \cala(\tau, \omega)$
for all $\tau \in \R$   and $\omega \in \omega$.
On the other hand, 
 by Theorem \ref{max1}  
we know   that 
problem \eqref{scf1}-\eqref{scf3} has two
tempered random complete  quasi-solutions
$u^*$ and $u_*$ in $\cqzero$ with properties
\eqref{max1_1}-\eqref{max1_3} and
$u^*(\tau, \omega)$,
$u_*(\tau, \omega) \in \cala(\tau, \omega)$
for all $\tau \in \R$
and $\omega \in \Omega$. 
This shows   that
$u^*$   and $u_*$ are the maximal and minimal
random complete quasi-solutions, respectively.
In addition, 
$u^*$   is  stable from above 
and $u_*$ is stable
  from below. 
  
  Note that for each 
  $t \in \R^+$,
  $\tau \in \R$
  and $\omega \in \Omega$, the solution 
  $u$ of problem \eqref{scf1}-\eqref{scf3}
  satisfies, 
  \be\label{bif1}
   u(\tau,  \tau -t, \theta_{ -\tau} \omega,
  - u_{\tau - t}  )
  =
  -   u(\tau,  \tau -t, \theta_{ -\tau} \omega,
   u_{\tau - t}  ).
   \ee
   By \eqref{max1_p1}-\eqref{max1_p2}
   and \eqref{bif1}   we get,
   for each $\tau \in \R$  and $\omega \in \Omega$,
   $
   u_* (\tau, \omega)
   =- u^*(\tau, \omega)$.
   From  this   and \eqref{max1_1}   we obtain,
   for each $\tau \in \R$  and $\omega \in \Omega$,
   \be\label{bif2}
   -u^*(\tau, \omega) (x)
   \le u(x)
   \le u^*(\tau, \omega)
   \quad
   \mbox{ for  all  } \ u \in \cala(\tau, \omega)
   \  \mbox {   and } \ x \in  \R^n.
   \ee
   
   If $\nu \in (0, \lambda_1)$,  then it  is clear that
   all solutions of problem \eqref{scf1}-\eqref{scf3}
   converge to zero. Therefore, the attractor $\cala$
   is trivial and $u^* =u_* =0$. This shows   that
   zero is the only random 
   complete quasi-solution of 
   problem \eqref{scf1}-\eqref{scf3}
   in this case.
   
   If $\nu >\lambda_1$,  then  the origin becomes
      unstable and hence $\cala$ is nontrivial.
      This along with \eqref{bif2} implies that
      $u^* \neq 0$. So, in this case,
      problem \eqref{scf1}-\eqref{scf3} has
      three different random  complete quasi-solutions:
      $u=0$,  $u=u^*  $  and $u=u_*= -u^* $.
      We will show   that
      $\nu =\lambda_1$ is  actually
      a bifurcation point.
       For that purpose, 
      we  need to prove 
   $\cala$ is   trivial
   when $\nu =\lambda_1$. 
   
   \begin{lem}
   \label{lbif1}
   Suppose $\gamma$ is a  bounded continuous  
   function  which satisfies
   \eqref{ga1}.
   If $\nu =\lambda_1 =1$,  then 
   $u=0$ is the unique tempered 
   random complete quasi-solution of 
     problem \eqref{scf1}-\eqref{scf3}.
    In this case, the random attractor
    $\cala$ is trivial.
     \end{lem}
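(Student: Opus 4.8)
The plan is to descend to the transformed, pathwise equation and close the estimate with an $L^2$ energy argument in which, because $\nu=\lambda_1$, the entire dissipation comes from the cubic term. Recall from Section~3 that the substitution $v=z\,u$ with $z(t,\omega)=e^{-\alpha\omega(t)}$ converts \eqref{scf1} into the deterministic problem
$$
\partial_t v-\Delta v=\nu v-\gamma(t)\,z^{-2}(t,\omega)\,v^3,\qquad v|_{\partial Q}=0,
$$
and that $u\equiv0$ if and only if $v\equiv0$. Along a fixed sample path, every complete quasi-solution of $\Phi$ corresponds to an entire solution $v:\R\to\cqzero$ of this equation, so by the structure formula for $\cala$ in Theorem \ref{thmrde1} it suffices to show that the only such entire solution is $v\equiv0$; this forces $\cala(\tau,\omega)=\{0\}$ and in particular $u^\ast=u_\ast=0$.

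Let $v$ be an entire solution and put $E(t)=\|v(t)\|_{\ltwo}^2$. Testing the equation with $v$ and using $\langle-\Delta v,v\rangle=\|\nabla v\|_{\ltwo}^2$, the Poincaré inequality $\|\nabla v\|_{\ltwo}^2\ge\lambda_1\|v\|_{\ltwo}^2$ on $Q=(0,\pi)$, the identity $\nu=\lambda_1$, the bound $\gamma(t)\ge\gamma_0$ from \eqref{ga1}, and Hölder's inequality $\|v\|_{\ltwo}^2\le|Q|^{1/2}\|v\|_{L^4(Q)}^2$, I would obtain
$$
\tfrac12 E'(t)=-\big(\|\nabla v\|_{\ltwo}^2-\lambda_1\|v\|_{\ltwo}^2\big)-\gamma(t)\,z^{-2}(t,\omega)\,\|v\|_{L^4(Q)}^4\le-\frac{\gamma_0}{|Q|}\,z^{-2}(t,\omega)\,E(t)^2.
$$
Hence $E$ is non-increasing and satisfies the Bernoulli inequality $E'\le-c\,z^{-2}E^2$ with $c=2\gamma_0/|Q|$. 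If $E(\tau)>0$ for some $\tau$, then $E(t_0)\ge E(\tau)>0$ for all $t_0\le\tau$, so $\frac{d}{dt}\big(1/E\big)\ge c\,z^{-2}\ge0$ and integration gives $\frac1{E(\tau)}\ge\frac1{E(t_0)}+c\int_{t_0}^{\tau}z^{-2}(s,\omega)\,ds\ge c\int_{t_0}^{\tau}e^{2\alpha\omega(s)}\,ds$, whence
$$
E(\tau)\le\Big(c\int_{t_0}^{\tau}e^{2\alpha\omega(s)}\,ds\Big)^{-1}\qquad\text{for every }t_0<\tau.
$$

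The real obstacle is the last step: to let $t_0\to-\infty$ and conclude $E(\tau)=0$, I must show that $\int_{-\infty}^{\tau}e^{2\alpha\omega(s)}\,ds=+\infty$ for $P$-a.e.\ $\omega$. Here the sublinear growth $\omega(s)/s\to0$ from \eqref{aspomega} is \emph{not} enough: it only yields the integrable lower bound $e^{2\alpha\omega(s)}\ge e^{2\alpha\varepsilon s}$. One must instead exploit the recurrence of the Wiener process. The clean route is $e^{2\alpha\omega(s)}\ge\mathbf 1_{\{\omega(s)\ge0\}}$, together with the fact that after time reversal $W(t)=\omega(-t)$ (a standard Brownian motion) the occupation time $\mathrm{Leb}\{t\ge0:W(t)\ge0\}$ is almost surely infinite; this makes the integral diverge. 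Rigorously this requires intersecting $\Omega$ with a further $\{\theta_t\}$-invariant set of full $P$-measure carrying this occupation property, which the author should either record explicitly or establish directly. Once the divergence is secured, $E(\tau)=0$ for every $\tau$, so $v\equiv0$ and therefore $u\equiv0$ is the unique tempered complete quasi-solution, while $\cala(\tau,\omega)=\{0\}$ for all $\tau\in\R$ and $\omega\in\Omega$.
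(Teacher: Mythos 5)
Your proof is correct, and it takes a genuinely different route from the paper's. The paper first reduces to showing $u^*=0$ via the pinching \eqref{bif2}, uses the positivity $u^*\ge 0$ (from $0\in\cala$ and maximality), expands in the eigenbasis $\{\sin nx\}$, and obtains a Bernoulli inequality \eqref{ibif_p9} \emph{only for the first mode} $a_1$, through the weighted H\"older inequality $\left(\int_0^\pi u\sin x\, dx\right)^3\le 4\int_0^\pi u^3\sin x\,dx$ (which requires $u\ge 0$); the higher modes are then killed in \eqref{ibif_p40} by the linear spectral gap $\lambda_2-\nu=3$, where the temperedness of $u^*$ is used in the pullback limit \eqref{ibif_p44}--\eqref{ibif_p50}. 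You instead run a single global $L^2$ estimate on the transformed pathwise equation: at $\nu=\lambda_1$ the Poincar\'e inequality neutralizes the linear part on all modes simultaneously, and $\|v\|_{L^4(Q)}^4\ge E^2/|Q|$ converts the cubic term into $E'\le -c\,z^{-2}E^2$, so you need neither positivity, nor the comparison principle, nor the symmetry $u_*=-u^*$, nor temperedness; indeed you prove the strictly stronger statement that the pathwise equation has \emph{no} nonzero entire $\cqzero$-valued solution, which yields triviality of $\cala$ and uniqueness of the quasi-solution in one stroke. Both arguments funnel into the identical probabilistic fact $\int_{-\infty}^0 e^{2\alpha\omega(s)}\,ds=+\infty$ a.s.: the paper disposes of it in \eqref{ibif_p20} by citing Lemma 2.3.41 of \cite{arn1}, while you correctly observe that the sublinear-growth property \eqref{aspomega} alone is insufficient (it only gives an integrable lower bound) and sketch a valid occupation-time/recurrence proof; your caveat about intersecting $\Omega$ with a further $\{\theta_t\}$-invariant set of full measure applies equally to the paper's own proof, which inherits the same restriction from the citation. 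What the paper's mode-splitting buys is the template that generalizes to monotone systems where only the critical mode loses dissipativity; what your energy argument buys is brevity, a stronger conclusion, and the avoidance of the formal Stratonovich manipulations in \eqref{ibif_p5} and \eqref{ibif_p40}, since the $v$-equation is pathwise deterministic and ordinary calculus suffices.
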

     
     \begin{proof}
     By \eqref{bif2}
       we only need to show
       $u^* =0$.
       Given $\tau \in \R$ and $\omega \in \Omega$,
       for each $t \ge \tau$  we denote by
       $u(t, \tau, \omega, u^*(\tau, \theta_{ \tau} \omega))$
       the solution of
       problem \eqref{scf1}-\eqref{scf3}
       with initial condition
       $u^*(\tau, \theta_{ \tau} \omega)$
       at initial time $\tau$.
       Since $u^*$ is a complete quasi-solution, we find  
       that  for every  $\tau \in \R$, $\omega \in \Omega$
       and  $t \ge \tau$,
       \be\label{lbif_p1}
       u(t, \tau, \omega, u^*(\tau, \theta_{ \tau} \omega))
       = u^* (t, \theta_{ t} \omega)
       \ge  0 .
       \ee
       Since $\{\sin n x \}_{n=1}^\infty$
       is an orthogonal basis of $\ltwo$, we may write
       \be\label{lbif_p2}
       u(t, \tau, \omega, u^*(\tau, \theta_{ \tau} \omega))
       = \sum_{n=1}^\infty
       a_n (t, \tau, \omega,  a_{n, \tau} ) \sin n x
       =u_1 + u_2 \quad \mbox{in } \ \ltwo,
       \ee
       where
       $u_1 = a_1 (t, \tau, \omega,   a_{1, \tau})  \sin x$ 
        and $u_2 = \sum\limits_{n=2}^\infty
       a_n (t, \tau, \omega,   a_{n, \tau} ) \sin n x$. 
       By \eqref{lbif_p1}-\eqref{lbif_p2}  we have
       \be\label{ibif_p3}
       a_1 (t, \tau, \omega,   a_{1, \tau}) = {\frac 2\pi}\int_0^\pi
         u(t, \tau, \omega, u^*(\tau, \theta_{ \tau} \omega))
          \sin x  \ dx  \ \ge 0.
       \ee
       By  \eqref{scf1}   with
       $\nu = \lambda_1 =1$   we get   
       \be
       \label{ibif_p5}
       {\frac {d a_1}{dt}}
       = - {\frac 2\pi} 
       \gamma (t)
       \int_0^\pi
       u^3 \sin x \  dx
       + \alpha a_1 \circ {\frac {d \omega}{dt}}.
       \ee
       By Holder\rq{} inequality we have
       $$
      \left ( \int_0^\pi u \sin x  \   dx
      \right  )^3
      \le 4 \int_0^\pi u^3  \sin x  \   dx,
      $$
      from which, \eqref{ga1} and
        \eqref{ibif_p3} we  get
       \be\label{ibif_p8}
       {\frac 2\pi} \gamma (t)
       \int_0^\pi
       u^3 \sin x  \ dx
       \ge {\frac {\pi^2}{16}}   \gamma_0 a_1^3.
       \ee
       By \eqref{ibif_p5}  and \eqref{ibif_p8} we get
         \be
       \label{ibif_p9}
       {\frac {d a_1}{dt}}
       \le  -  {\frac {\pi^2}{16}} \gamma_0 a_1^3
       + \alpha a_1 \circ {\frac {d \omega}{dt}}.
       \ee
       Solving  for $a_1$ from \eqref{ibif_p9}  we obtain,   for every
       $\tau \in \R$, $\omega \in \Omega$ and
       $t \ge \tau$,
       \be\label{ibif_p10}
       a_1(t, \tau, \omega,  a_{1, \tau})
      \le {\frac {
      e^{\alpha (\omega (t) -\omega (\tau))}  a_{1, \tau} }
      { \sqrt{1+ {\frac {\pi^2}{8}}\gamma_0  a_{1, \tau} ^2
      \int_\tau^t  e^{2\alpha (\omega(s) - \omega (\tau))} ds      }}}.
      \ee
      It follows from \eqref{ibif_p10} that,  for every
      $t \in \R^+$, $\tau \in \R$  and 
      $\omega \in \Omega$,
      \be\label{ibif_p11}
       a_1(\tau, \tau -t,  \theta_{ -\tau} \omega, a_{1, \tau -t})
       = a_1 (0, -t, \omega, a_{1, \tau -t} )
      \le {\frac {
      e^{-\alpha \omega (-t)  } a_{1, \tau -t}}
      { \sqrt{1+ {\frac {\pi^2}{8}}\gamma_0 a_{1, \tau-t} ^2
      \int_{-t} ^0  e^{2\alpha (\omega(s) - \omega (-t))} ds      }}}.
      \ee
      By \eqref{ibif_p11}  we find that
       for every $\tau \in \R$  and 
      $\omega \in \Omega$,
      \be\label{ibif_p20}
      \limsup_{t \to \infty}
       a_1(\tau, \tau -t,  \theta_{ -\tau} \omega, a_{1, \tau -t})
       \le 
       {\frac 2\pi} \left ({\frac 2{\gamma_0}}
       \right )^{\frac 12}
       \limsup_{t \to \infty}
      \left (
       \int_{-t} ^0  e^{2\alpha \omega(s) } ds 
       \right )^{-{\frac 12}}=0,
       \ee
       where we have used Lemma 2.3.41 in \cite{arn1}
       for the last limit.
       Since $u^*$ is a complete quasi-solution, by \eqref{ibif_p3}
       we obtain,
        \be\label{ibif_p21}
       a_1(\tau, \tau -t,  \theta_{ -\tau} \omega, a_{1, \tau -t})
       = {\frac 2\pi}\int_0^\pi
         u(\tau, \tau -t,  \theta_{ -\tau} \omega,
          u^*(\tau -t, \theta_{ -t} \omega))
          \sin x  \ dx   
          ={\frac 2\pi}\int_0^\pi 
          u^*(\tau ,   \omega )
          \sin x  \ dx
       \ee
       By \eqref{ibif_p20}-\eqref{ibif_p21}  we find   that
       for every $\tau \in \R$  and
       $\omega \in \Omega$,
       \be\label{ibif_p24}
       \int_0^\pi 
          u^*(\tau ,   \omega )
          \sin x  \ dx =0.
          \ee
          By \eqref{lbif_p1}, \eqref{ibif_p3}
          and \eqref{ibif_p24}  we get, 
          for every 
     $\tau \in \R$,   
       $\omega \in \Omega$
       and $t \ge \tau$,
        \be\label{ibif_p30}
       a_1 (t, \tau, \omega,  a_{1, \tau}) = {\frac 2\pi}\int_0^\pi
         u(t, \tau, \omega, u^*(\tau, \theta_{ \tau} \omega))
          \sin x  \ dx  
          =
          {\frac 2\pi}\int_0^\pi
          u^*(t, \theta_{ t} \omega )
          \sin x  \ dx  =0.
       \ee
       On the other hand, by 
       taking  the inner product of \eqref{scf1} 
       with $u_2$   in $\ltwo$,  we obtain,
       for every $\tau \in \R$, $\omega \in \Omega$
       and $t \ge \tau$, 
       $$
{\frac d{dt}} \| u_2 \|^2_\ltwo
       + 2 (\lambda_2 - \nu) \|u_2 \|^2 _\ltwo
       =-2\gamma (t) \int_0^\pi u^3 u_2 dx
  + 2\alpha \|u_2\|^2 _\ltwo \circ {\frac {d\omega}{dt}}
      $$
      \be
       \label{ibif_p40}
      =
      -2\gamma (t) \int_0^\pi u^4 dx
      + 2 \gamma  (t) \int_0^\pi u^3 u_1 dx
  + 2\alpha \|u_2\|^2 _\ltwo \circ {\frac {d\omega}{dt}}
  \le 
    2\alpha \|u_2\|^2 _\ltwo \circ {\frac {d\omega}{dt}},
    \ee
    where the last inequality  follows   from
    \eqref{ibif_p30}.
    Note that $\lambda_2 =4$   and $\nu =1$ in the present case.
    Therefore, it follows   from \eqref{ibif_p40}
    that
    for every $\tau \in \R$, $\omega \in \Omega$
       and $t \ge \tau$, 
       \be\label{ibif_p43}
       \| u_2 (t, \tau, \omega, u_{2,\tau})\|^2_\ltwo
       \le
       e^{6(\tau -t)} e^{ 2\alpha (\omega(\tau) - \omega (t))  }
       \|u_\tau \|^2_\ltwo.
       \ee
       By \eqref{ibif_p43}  we have,
    for every $ t \in \R^+$,
     $\tau \in \R$   and
     $\omega \in \Omega$,
       \be\label{ibif_p44}
       \| u_2 (\tau, \tau -t ,  \theta_{ -\tau} \omega, u^*_{2}
         (\tau -t, \theta_{-t} \omega )
         )\|^2_\ltwo
       \le
       e^{-6 t  } 
e^{-2\alpha  \omega (\tau)} 
       e^{ 2\alpha  \omega(\tau -t   ) }
       \| u^*  (\tau -t, \theta_{-t} \omega ) \|^2_\ltwo.
       \ee
       By \eqref{aspomega} and the temperedness of $u^*$
       in $\cqzero$, we obtain
       from \eqref{ibif_p44}
       that, 
        for every  
     $\tau \in \R$   and
     $\omega \in \Omega$,
     \be\label{ibif_p50}
      \lim_{t \to \infty}
       \| u_2 (\tau, \tau -t ,  \theta_{ -\tau} \omega, u^*_{2}
         (\tau -t, \theta_{-t} \omega )
         )\|^2_\ltwo =0.
         \ee
         By \eqref{lbif_p2}, \eqref{ibif_p30}
         and \eqref{ibif_p50} we find that,
         for every  
     $\tau \in \R$   and
     $\omega \in \Omega$,
        \be\label{ibif_p51}
      \lim_{t \to \infty}
       \| u  (\tau, \tau -t ,  \theta_{ -\tau} \omega, u^* 
         (\tau -t, \theta_{-t} \omega )
         )\|^2_\ltwo =0.
         \ee
         Since $u^*$ is a complete quasi-solution, from \eqref{ibif_p51}
         we get
         $u^*(\tau, \omega) = 0$ in $\ltwo$   for all $\tau\in \R$
         and $\omega \in \Omega$.
         This implies $u^*(\tau, \omega) (x) = 0$
         for all $x \in Q$  since $u^*(\tau, \omega)\in \cqzero$.
     \end{proof}
     
     We are now ready to prove the following bifurcation result
     for problem \eqref{scf1}-\eqref{scf3}.
     
   \begin{thm}
   \label{thmbi}
   Suppose $\gamma$ is a  bounded continuous  
   function satisfying 
   \eqref{ga1}.
   Then  the tempered 
   random  complete
     quasi-solutions of 
     problem \eqref{scf1}-\eqref{scf3}
    undergo a pitchfork bifurcation when
    the parameter $\nu$ crosses $\nu=1$
    from below.
    More precisely, if $\nu \le 1$, 
    problem \eqref{scf1}-\eqref{scf3} 
    has a unique tempered random
     complete
     quasi-solution
    $u=0$; if $\nu >1$,  the problem has 
    three  different tempered random  complete
     quasi-solutions:
    $u^*_\nu$, $-u^*_\nu$ and $0$.
    Furthermore, $u^*_\nu(\tau, \omega) \to 0$
    when $\nu \to 1$ for every $\tau\in\R$
    and $\omega \in \Omega$.
     \end{thm}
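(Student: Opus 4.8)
The plan is to treat the three assertions of Theorem~\ref{thmbi} separately, spending the bulk of the effort on the collapse $u^*_\nu\to 0$, since the multiplicity statements are essentially already contained in the discussion preceding Lemma~\ref{lbif1}. For $\nu<1$ I would choose $\beta=\max\{\nu,0\}<\lambda_1=1$ in \eqref{f2}: since $f(t,x,s)s=\nu s^2-\gamma(t)s^4\le\beta s^2$, condition \eqref{f2} holds with $h\equiv 0$ and $g\equiv 0$, so the complete quasi-solution $\xi$ of the linear problem \eqref{lv1}-\eqref{lv3} produced by Lemma~\ref{lcoms2} is identically zero. By the super-/sub-solution bounds built into the proof of Theorem~\ref{max1}, $u^*\le\xi=0$; but $0\in\cala(\tau,\omega)$ and $u^*$ is maximal in $\cala(\tau,\omega)$, so $u^*\ge 0$, forcing $u^*=u_*=0$ and a trivial attractor. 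For $\nu=1$ the same conclusion is exactly Lemma~\ref{lbif1}, which settles uniqueness of $u=0$ for all $\nu\le 1$. For $\nu>1$, the origin is unstable: the first Fourier mode of the linearization at $0$ satisfies $\dot a_1=(\nu-1)a_1+\alpha a_1\circ\dot\omega$, so $a_1(t)=a_1(\tau)e^{(\nu-1)(t-\tau)+\alpha(\omega(t)-\omega(\tau))}\to\infty$ as $t\to\infty$ whenever $a_1(\tau)\neq 0$ (using \eqref{aspomega}); hence $0$ cannot attract a neighborhood and $\cala$ does not reduce to $\{0\}$. Together with the symmetry $u_*=-u^*$ and \eqref{bif2}, nontriviality of $\cala$ forces $u^*_\nu\neq 0$, so $\{0,\,u^*_\nu,\,-u^*_\nu\}$ are three distinct tempered complete quasi-solutions.

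The main work is the branch collapse $u^*_\nu\to 0$ as $\nu\downarrow 1$. I would work in the transformed variable $v=zu$, $z=e^{-\alpha\omega}$, which by \eqref{v1} satisfies the deterministic equation $\partial_t v-\Delta v=\nu v-\gamma(t)z^{-2}v^3$. Testing with $v$ and using Poincar\'e ($\|v_x\|^2\ge\lambda_1\|v\|^2=\|v\|^2$), $\gamma\ge\gamma_0$ and the Cauchy--Schwarz bound $\int_0^\pi v^4\ge\pi^{-1}\|v\|_{\ltwo}^4$, one obtains for $y=\|v\|_{\ltwo}^2$ the scalar Bernoulli inequality $\dot y\le 2(\nu-1)y-\tfrac{2\gamma_0}{\pi}z^{-2}y^2$. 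Solving the associated equality for $1/y$ and discarding the nonnegative contribution of the initial datum gives, for the pullback solution started at time $\tau-t$, a bound of the shape $\|v(\tau)\|_{\ltwo}^2\le\frac{\pi}{2\gamma_0}\big(\int_{\tau-t}^{\tau}e^{-2(\nu-1)(\tau-s)}e^{2\alpha\omega(s)}\,ds\big)^{-1}$. Transcribing back to $u$ and letting $t\to\infty$, exactly as in the passage \eqref{ibif_p11}--\eqref{ibif_p20} of Lemma~\ref{lbif1}, the initial data drop out and I obtain, for every fixed $\tau,\omega$, the explicit estimate $\|u^*_\nu(\tau,\omega)\|_{\ltwo}^2\le\frac{\pi}{2\gamma_0}\big(\int_{-\infty}^{0}e^{2(\nu-1)s}e^{2\alpha\omega(s)}\,ds\big)^{-1}$.

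It then remains to pass $\nu\downarrow 1$ and upgrade the norm. For $s<0$ the integrand $e^{2(\nu-1)s+2\alpha\omega(s)}$ increases monotonically to $e^{2\alpha\omega(s)}$ as $\nu\downarrow 1$, and $\int_{-\infty}^0 e^{2\alpha\omega(s)}\,ds=\infty$ (Lemma~2.3.41 in \cite{arn1}, the very fact invoked in \eqref{ibif_p20}); by monotone convergence the integral diverges, so the right-hand side tends to $0$ and $\|u^*_\nu(\tau,\omega)\|_{\ltwo}\to 0$. To lift $\ltwo$ to $\cqzero$, I note that for $\nu$ in a fixed bounded interval, say $(1,2]$, the absorbing radius $R(\tau,\omega)$ of Lemma~\ref{est7} and the $D(A_0^\gamma)$-estimate of Lemma~\ref{est2} are uniform in $\nu$, so $\{u^*_\nu(\tau,\omega)\}_{\nu\in(1,2]}$ is bounded in $D(A_0^\gamma)$ and hence precompact in $\cqzero$. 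If $u^*_\nu(\tau,\omega)\not\to 0$ in $\cqzero$ along some $\nu_k\downarrow 1$, a subsequence would converge in $\cqzero$ to some $w\neq 0$, contradicting the established $\ltwo$-convergence to $0$; therefore $u^*_\nu(\tau,\omega)\to 0$ in $\cqzero$.

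The principal difficulty is not the scalar comparison but the two limits: making the pullback limit $t\to\infty$ in the Bernoulli bound rigorous (which, as in Lemma~\ref{lbif1}, relies on the growth control \eqref{aspomega} and the divergence from \cite{arn1}), and the upgrade from $\ltwo$ to $\cqzero$, which needs the regularity bounds of Lemma~\ref{est2} to hold uniformly as $\nu\downarrow 1$. A secondary point requiring care is the rigorous deduction that $\cala$ is nontrivial for $\nu>1$ from the instability of the origin, so that $u^*_\nu\neq 0$ genuinely holds on the whole supercritical range.
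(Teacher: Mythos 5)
Your proposal is correct in substance, and at the decisive step it takes a genuinely different route from the paper. For $\nu\le 1$ you and the paper coincide: the case $\nu=1$ is exactly Lemma \ref{lbif1}, and for $\nu<1$ your comparison with the linear problem (choosing $\beta$ so that $\nu s^2-\gamma(t)s^4\le \beta s^2$, whence $\xi\equiv 0$ and $u^*\le \xi=0\le u^*$) is a more detailed version of the paper's one-line assertion that all solutions converge to zero. The real divergence is the collapse $u^*_\nu\to 0$ as $\nu\downarrow 1$: the paper's own proof is soft, invoking the upper semicontinuity of the attractors $\cala_\nu$ in $\nu$ from \cite{wan8} and combining it with $\cala_1(\tau,\omega)=\{0\}$ (Lemma \ref{lbif1}) to conclude $u^*_\nu(\tau,\omega)\to 0$ since $u^*_\nu(\tau,\omega)\in\cala_\nu(\tau,\omega)$. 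You instead prove a quantitative pathwise bound: the Bernoulli inequality $\dot y\le 2(\nu-1)y-\frac{2\gamma_0}{\pi}z^{-2}y^2$ for $y=\|v\|^2_{\ltwo}$ checks out (Poincar\'e with $\lambda_1=1$, Cauchy--Schwarz giving $\int_0^\pi v^4\ge \pi^{-1}\|v\|^4_{\ltwo}$, and note $z(\tau,\omega^{-\tau})=1$ at the end time, so the bound transcribes to $u$ with no extra factor); discarding the initial datum and passing to the pullback limit as in \eqref{ibif_p10}--\eqref{ibif_p20} gives $\|u^*_\nu(\tau,\omega)\|^2_{\ltwo}\le \frac{\pi}{2\gamma_0}\bigl(\int_{-\infty}^0 e^{2(\nu-1)s}e^{2\alpha\omega(s)}ds\bigr)^{-1}$, and monotone convergence together with $\int_{-\infty}^0 e^{2\alpha\omega(s)}ds=\infty$ (Arnold's Lemma 2.3.41 in \cite{arn1}, the same fact the paper uses in \eqref{ibif_p20}) yields the $\ltwo$-collapse; the upgrade to $\cqzero$ via uniform-in-$\nu$ versions of Lemmas \ref{est7} and \ref{est2} is legitimate, since for $\nu\in(1,2]$ one can fix $\beta\in(0,1)$ and a constant $h$ in \eqref{f2} uniformly. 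What each approach buys: the paper's argument is shorter but outsources the heart of the matter to an external semicontinuity theorem; yours is self-contained and even produces an explicit rate for the bifurcating branch, at the cost of the extra regularity bookkeeping.

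Two minor caveats. First, your choice $\beta=\max\{\nu,0\}$ is not admissible when $\nu\le 0$, since \eqref{delta1} requires $\beta\in(0,\lambda)$; take any small $\beta>0$ instead, which still dominates $\nu s^2$. Second, for $\nu>1$ your linearized first-mode computation establishes forward instability of the origin, but by itself this does not rigorously force the \emph{pullback} attractor to be nontrivial: one would need to control the nonlinear trajectory (e.g., a sub-solution or pullback permanence argument in the spirit of \cite{lan1}, or the analysis of \cite{car8}) to exclude $\cala_\nu=\{0\}$. The paper asserts exactly the same implication (``the origin becomes unstable and hence $\cala$ is nontrivial'') with no more detail, so you are at parity with it here, and you correctly flag this as the point requiring care.
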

     
     \begin{proof}
      By Lemma \ref{lbif1}  and what we discussed before,
      if   $\nu \le 1$,  then
        $u=0$  is the only tempered random 
         complete
     quasi-solution
        of the stochastic problem \eqref{scf1}-\eqref{scf3} 
        and the  random attractor $\cala_\nu$ is trivial,
        i.e., $\cala_\nu(\tau, \omega) = \{0\}$
        for all $\tau \in \R$  and $\omega \in \Omega$.
        On  the other hand, if $\nu >1$ the problem has at least
        three tempered random  complete
     quasi-solutions:
        $u^*_\nu$, $-u^*_\nu$ and $0$.
        The attractor $\cala_\nu$ is nontrivial in this case.
        By  \cite{wan8},  we know 
         $\cala_\nu$ is upper-semicontinuous when
        $\nu \to 1$. This means   that 
        for every $\tau\in\R$
    and $\omega \in \Omega$,
        $ 
        \lim\limits_{\nu \to 1}
        {\rm dist}_{\cqzero} (\cala_\nu (\tau, \omega), \cala_1 (\tau, \omega)) =0$. 
        Since $u^*_\nu(\tau, \omega) \in \cala_\nu (\tau, \omega)$
        and  $\cala_1 (\tau, \omega) =\{0\}$,  we must
        have $u^*_\nu (\tau, \omega) \to 0$ as $\nu \to 1$.
      \end{proof}
    
    Suppose now 
   $\gamma : \R \to \R$ is $T$-periodic.  By Theorem \ref{max2} we find that
   the quasi-solution $u^*_\nu$ above is also $T$-periodic. As an immediate consequence,
   we obtain the following bifurcation of random periodic solutions.
   
   \begin{cor}
   \label{thmbiper}
   Suppose $\gamma : \R \to \R$ is a continuous periodic
   function  satisfying 
   \eqref{ga1}.
   Then  the tempered 
   random periodic
     solutions of 
     problem \eqref{scf1}-\eqref{scf3}
    undergo a pitchfork bifurcation when
    the parameter $\nu$ crosses $\nu=1$
    from below.
    More precisely, if $\nu \le 1$, 
    problem \eqref{scf1}-\eqref{scf3} 
    has a unique tempered random periodic solution
    $u=0$; if $\nu >1$,  the problem has 
    three  different tempered random periodic solutions:
    $u^*_\nu$, $-u^*_\nu$ and $0$.
    Furthermore, $u^*_\nu(\tau, \omega) \to 0$
    when $\nu \to 1$ for every $\tau\in\R$
    and $\omega \in \Omega$.
     \end{cor}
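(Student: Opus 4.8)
The plan is to obtain this statement exactly as its label suggests, namely as a direct corollary, by transferring the bifurcation picture for random complete quasi-solutions established in Theorem \ref{thmbi} to the periodic setting and then upgrading the extremal solutions to random periodic solutions via Theorem \ref{max2}. The guiding observation is that a tempered complete quasi-solution which happens to be $T$-periodic is, by Definition \ref{comporbit}(iii), precisely a random periodic solution; hence no new existence or uniqueness argument is required, and it suffices to verify that the solutions already produced in Theorem \ref{thmbi} are in fact periodic.

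First I would confirm that the hypotheses of both invoked theorems hold for the present data. Since $\gamma$ is assumed continuous and periodic, it is automatically bounded, so Theorem \ref{thmbi} applies and yields the full bifurcation structure of the tempered random complete quasi-solutions. Next I would check the periodicity condition \eqref{perso1}: writing $f(t,x,s)=\nu s-\gamma(t)s^3$, periodicity of $\gamma$ gives $f(t+T,x,s)=f(t,x,s)$; the inhomogeneous term $g$ is absent (i.e.\ $g\equiv 0$), and the auxiliary function $h$ may be taken to be a positive constant, so both are trivially $T$-periodic. Thus \eqref{perso1} is satisfied, and Theorem \ref{max2} applies, so the maximal and minimal complete quasi-solutions $u^*_\nu$ and $u_*=-u^*_\nu$ are $T$-periodic, hence random periodic solutions. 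The trivial solution $u=0$ is of course $T$-periodic as well.

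With periodicity in hand, the counting and convergence claims are inherited verbatim from Theorem \ref{thmbi}. For $\nu\le 1$ that theorem already forces $u=0$ to be the unique tempered complete quasi-solution, hence the unique random periodic solution. For $\nu>1$ the three distinct complete quasi-solutions $u^*_\nu$, $-u^*_\nu$ and $0$ are all periodic by the previous paragraph, giving the three announced random periodic solutions; and the limit $u^*_\nu(\tau,\omega)\to 0$ as $\nu\to 1$ is exactly the convergence statement of Theorem \ref{thmbi}, valid because the periodic solutions coincide pointwise with the complete quasi-solutions produced there.

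I do not anticipate a substantive obstacle, as the whole argument is a bookkeeping combination of Theorems \ref{thmbi} and \ref{max2}. The only point requiring minimal care is to confirm the simultaneous compatibility of the structural conditions \eqref{f1}--\eqref{f2} (used for Theorem \ref{thmbi}) with the periodicity condition \eqref{perso1} (used for Theorem \ref{max2}) under one common choice of data; this is immediate here, since $\gamma\ge\gamma_0$ by \eqref{ga1} permits $h$ to be an arbitrary positive constant, which both satisfies \eqref{f2} and is trivially $T$-periodic.
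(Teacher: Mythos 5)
Your proposal is correct and matches the paper's own argument: the paper derives the corollary in exactly this way, by noting that periodicity of $\gamma$ gives \eqref{perso1} (with $g\equiv 0$ and $h$ a positive constant), invoking Theorem \ref{max2} to conclude $u^*_\nu$ and $-u^*_\nu$ are $T$-periodic, and inheriting the counting and the limit $u^*_\nu\to 0$ from Theorem \ref{thmbi}. Your extra remark that uniqueness for $\nu\le 1$ among periodic solutions follows since every random periodic solution is in particular a tempered complete quasi-solution is the right (implicit) bookkeeping step.
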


\end{document}